\newtheorem{lemma}{Lemma}
\newtheorem{thm}{Theorem}
\newtheorem{remark}{Remark}
\numberwithin{equation}{section}
\numberwithin{thm}{section}
\numberwithin{corollary}{section}
\numberwithin{remark}{section}
\numberwithin{lemma}{section}
\begin{document}
\title[Milstein Scheme for SDEwMS]{A Note on Explicit Milstein-Type Scheme for Stochastic Differential Equation with Markovian Switching}

\author{Chaman Kumar}
\address{Chaman Kumar, Department of Mathematics\\
Indian Institute of Technology Roorkee, India}
\email{C.Kumarfma@iitr.ac.in}

\author{Tejinder Kumar}
\address{Tejinder Kumar, Department of Mathematics\\
Indian Institute of Technology Roorkee, India}
\email{tejinder.dma2017@iitr.ac.in}

\maketitle

\begin{abstract}
An explicit Milstein-type scheme for stochastic differential equation with Markovian switching is derived and its strong convergence in $\mathcal{L}^2$-sense is established without using  It\^o-Taylor expansion formula. Rate of strong convergence is shown to be equal to $1.0$ under the assumptions that coefficients satisfy mild regularity conditions. More precisely, coefficients are assumed to be only once differentiable which are more relaxed conditions than those made in existing literature.
\end{abstract}

\section{\bf{Introduction}}

Stochastic differential equation with Markovian switching (SDEwMS) has found several applications in real world such as   \cite{bao2016, mao2006, sethi1994,  yin2010, zhang1998, zhang2001} and references therein. Often, explicit solution of SDEwMS is not available and hence one requires numerical approximation for such equation. The order $1/2$  Euler scheme for SDEwMS has been discussed in literature for example  \cite{nguyen2018, nguyen2012, yang2018} and references therein. Recently, a Milstein-type scheme for SDEwMS has been developed in \cite{nguyen2017}. Authors in \cite{nguyen2017} give an  It\^o's formula (see e.g. Lemma 2.2) for the switching coefficient and hence derive a Milstein-type scheme for SDEwMS. Their approach for derivation and for establishing strong convergence results (in $\mathcal{L}^2$-sense) of Milstein-type scheme is inspired by   well known It\^o-Taylor expansion and hence  authors impose second order differentiability assumptions on the coefficients.  Motivated by \cite{kumar2019}, a new approach for  derivation and for establishing strong convergence results (in $\mathcal{L}^2$-sense) of Milstein-type scheme is  developed in this article that do not  require It\^o-Taylor expansion. As a consequence of our approach, drift and diffusion coefficients are assumed to be only once differentiable and hence is a significant reduction on regularity requirements on the coefficients when compared with the corresponding results obtained in \cite{nguyen2017}. 

Let us now introduce some notations used in this article. For $b\in \mathbb{R}^d$ and $\sigma \in \mathbb{R}^{d\times m}$, $|b|$ and $|\sigma|$ are used for Euclidean and Hilbert-schmidt norms  respectively which is clear from the context at which they appear.  The $l$-th element of a vector $b\in\mathbb{R}^d$ is denoted by $b_l$ and the $l$-th column of a matrix $\sigma\in\mathbb{R}^{d\times m}$ is denoted by $\sigma_{(l)}$. For $x,y\in\mathbb{R}^d$, $xy$ stands for  their inner product.  Further, if $f:\mathbb{R}^d \to \mathbb{R}^d$, then $\mathcal{D}f$ returns a $d\times d$ matrix with  $\frac{\partial f_i(\cdot)}{\partial x_j}$ as $(i,j)$-th entry for  $i,j=1,\ldots,d$. All through this article, $C>0$ stands for a generic constant which can vary from place to place and is always independent of the discretization step-size. 

\section{\bf{Main Result}}

\noindent
Let $ \big( \Omega ,   \mathscr{F}, P \big)$  be  a complete probability space. Suppose that $W:=\{W(t); t\geq0\}$ is  an $\mathbb{R}^m$-valued standard Wiener process. Also, assume that $\alpha:=\{\alpha(t);t\geq 0\}$ is a continuous-time Markov chain with  finite state space $\mathcal{S}:=\{1,2,\ldots,m_0 \}$,  for a fixed positive integer $m_0$. The local behaviour of the chain is governed by the generator $Q=(q_{i_0j_0}; i_0, j_0 \in \mathcal{S})$ with $q_{i_0j_0}\geq 0$,  for any $i_0\neq j_0\in \mathcal{S}$ and $q_{i_0i_0}=-\sum_{j_0\neq i_0} q_{i_0j_0}$ for any $i_0\in\mathcal{S}$. Further, let $b:\mathbb{R}^d\times \mathcal{S} \to \mathbb{R}^d$ and $\sigma:\mathbb{R}^d\times \mathcal{S} \to \mathbb{R}^{d \times m}$ be functions satisfying certain conditions to be specified later. The main aim of this article is to study the Milstein scheme for the following $d$-dimensional Stochastic Differential Equation with Markovian Switching (SDEwMS), 
\begin{align} \label{eq:sdems}
dX(t)=b(X(t),\alpha(t))dt+\sigma(X(t),\alpha(t))dW(t)
\end{align} 
almost surely for any $t\in[0,T]$ with initial value $X(0)$, which is an $\mathscr{F}_0$-measurable random variable taking values in $\mathbb{R}^d$. It is further assumed that  $X(0)$,  $W$ and $\alpha$ are independent. Also, let   $\mathscr{F}^W$ and $\mathscr{F}^\alpha$ be filtrations generated by $(X(0),W)$ and $\alpha$ respectively  \textit{i.e.} $\mathscr{F}_t^W:=\sigma\{X_0,\, W(s);\, 0\leq s\leq t\}$ and $\mathscr{F}_t^\alpha:=\sigma\{\alpha(s);\, 0\leq s\leq t\}$. Define $\mathscr{F}_t:=\mathscr{F}_t^W \vee \mathscr{F}_t^\alpha$ for any $t\geq 0$.

Let us now introduce the  Milstein scheme of SDEwMS \eqref{eq:sdems}. For this, one partitions the interval $[0,T]$ into subintervals of equal length $h>0$ \textit{i.e.}, $t_n=nh$ for any $n=0,1,\ldots, n_T$ with $n_T:=Th^{-1}$ and  define $\Delta_n t:=h=t_{n+1}-t_n$, $\Delta_n W:=W(t_{n+1})-W(t_n)$ for $n=0,1,\ldots,n_T-1$.  The Milstein scheme for SDEwMS \eqref{eq:sdems} at grid point $t_{n+1}$ is given by,
\begin{align} \label{eq:scheme}
Y_{n+1} & =Y_n+b(Y_n, \alpha_n) h +\sum_{l=1}^m\sigma_{(l)}(Y_n,\alpha_n)\Delta_nW_l\notag
\\
&+\sum_{l,l_1=1}^m\int_{t_n}^{t_{n+1}}  \int_{t_n}^{s} \mathcal{D} \sigma_{(l)}(Y_n,\alpha_n) \sigma_{(l_1)}(Y_n,\alpha_n) dW_{l_1}(u) dW_l(s) \notag 
\\
&+\sum_{l=1}^m \mathbbm{1}_{\{N_n=1\}}\Big(\sigma_{(l)}(Y_n,\alpha_{n+1})-\sigma_{(l)}(Y_n,\alpha_n)\Big)\Big(W_l(t_{n+1})-W_l(\tau_1^n)\Big)
\end{align}
almost surely with initial value $Y_0$ which is an $\mathscr{F}_0$-measurable random variable in $\mathbb{R}^d$. Here $\alpha_n=\alpha(t_n)$, $N_n$ is the  number of jumps and $\tau_1^n$ is the time of first jump of the chain $\alpha$ in the interval $(t_n, t_{n+1})$ for any $n=0,1,\ldots,n_T-1$. To show that  Milstein scheme \eqref{eq:scheme} of SDEwMS \eqref{eq:sdems} has a rate of convergence equal to $1$, the following assumptions are made. Let $p\geq 2$ be a fixed constant. 
\newline  
\noindent 
\textbf{Assumption H-1.} There exists a  constant $L>0$ such that  $E|X_0|^{p}\leq L$ and $E|X_0-Y_0|^2\leq L h^2$. 
\newline 
\noindent 
\textbf{Assumption H-2.} 
There exists a  constant $L>0$ such that, for every $i_0\in \mathcal{S}$, 
\begin{align*}
|b(x,i_0)-b(y,i_0)|\vee |\sigma(x,i_0)-\sigma(y,i_0)|\leq L|x-y|
\end{align*}
 for any  $x,y\in \mathbb{R}^d$. 
\newline 
\noindent 
\textbf{Assumption H-3.} There exists a constant $L>0$ such that, for every $i_0\in \mathcal{S}$,
\begin{align*}
|\mathcal{D}b(x,i_0)-\mathcal{D}b(y,i_0)|\vee|\mathcal{D}\sigma_{(l)}(x,i_0)-\mathcal{D}\sigma_{(l)}(y,i_0)|&\leq L|x-y|
\\
|\mathcal{D}\sigma_{(l)}(x,i_0)\sigma_{(l_1)}(x,i_0)-\mathcal{D}\sigma_{(l)}(y,i_0)\sigma_{(l_1)}(y,i_0)| &\leq L|x-y|
\end{align*} 
for any $x,y\in \mathbb{R}^d$ and $ l,l_1=1,\ldots, m$.  
\begin{remark}\label{rem:growth}
Due to Assumptions H-2 and H-3,  for every $i_0\in\mathcal{S}$, 
\begin{align*}
|b(x,i_0)| \vee |\sigma(x,i_0)| & \leq L(1+|x|)
\\
|\mathcal{D}b(x,i_0)|\vee |\mathcal{D} \sigma_{(l)}(x,i_0)| & \leq L
\end{align*}
for any $x\in\mathbb{R}^d$ and and $ l=1,\ldots, m$.  The case when the drift coefficient satisfies one-sided  and polynomial Lipschitz conditions (i.e. have  super-linear growth)  is developed in our joint work \cite{kumar2019a} where we propose a tamed Milstein scheme for SDEwMS.  
\end{remark}
The following is the main result of this article. 
\begin{thm} \label{thm:main}
Let Assumptions H-1, H-2 and H-3 be satisfied. Then, the Milstein scheme \eqref{eq:scheme} converges in $\mathcal{L}^2$-sense to the true solution of SDEwMS \eqref{eq:sdems} with rate of convergence equal to $1$, \textit{i.e.}, there exists a constant $C>0$ independent of $h$ such that, 
$$
E\Big(\sup_{n\in\{0,1,\ldots,n_T\}}|X_n-Y_n|^2 \Big)\leq C h^{2}
$$
where $X_n=X(t_n)$ for any $n=0,1,\ldots,n_T$ and $0<h<1/(2q)$ with $q:=\max\{-q_{i_0i_0};i_0\in\mathcal{S}\}$. 
\end{thm}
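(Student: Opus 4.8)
The plan is to pass to a continuous-time interpolation $\bar Y(t)$ of the scheme \eqref{eq:scheme} that agrees with $Y_n$ at each grid point $t_n$, and then to estimate $E\big(\sup_{t\le T}|X(t)-\bar Y(t)|^2\big)$ directly, since this dominates the discrete supremum in the statement. Before the error analysis I would establish uniform moment bounds $\sup_{t\le T}E|X(t)|^{p}<\infty$ and $\sup_n E|Y_n|^{p}<\infty$. The former is standard for SDEwMS under the linear growth consequence of H-2 recorded in Remark \ref{rem:growth}; the latter requires checking that the iterated-integral and switching corrections in \eqref{eq:scheme} preserve $p$-th moments, which follows from the boundedness of $\mathcal{D}b$, $\mathcal{D}\sigma_{(l)}$ and $\mathcal{D}\sigma_{(l)}\sigma_{(l_1)}$, the Burkholder--Davis--Gundy (BDG) inequality, and the independence of the jump indicator $\mathbbm{1}_{\{N_n=1\}}$ from $W$.

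The crucial idea, and the reason only first-order differentiability is needed, is to avoid any It\^o--Taylor expansion of $\sigma$ and instead represent the increment of the diffusion coefficient along the true path exactly. On each subinterval $[t_n,s]$ I would split
\[
\sigma_{(l)}(X(s),\alpha(s))-\sigma_{(l)}(X(t_n),\alpha(t_n))=\big[\sigma_{(l)}(X(s),\alpha(t_n))-\sigma_{(l)}(X(t_n),\alpha(t_n))\big]+\big[\sigma_{(l)}(X(s),\alpha(s))-\sigma_{(l)}(X(s),\alpha(t_n))\big],
\]
the first bracket being the diffusive change and the second the switching change. For the diffusive change I would use the fundamental theorem of calculus, writing it as $\int_0^1\mathcal{D}\sigma_{(l)}\big(X(t_n)+r(X(s)-X(t_n)),\alpha(t_n)\big)\,dr\,(X(s)-X(t_n))$, and then substitute $X(s)-X(t_n)=\int_{t_n}^s b\,du+\sum_{l_1}\int_{t_n}^s\sigma_{(l_1)}\,dW_{l_1}$. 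The leading stochastic contribution $\mathcal{D}\sigma_{(l)}(X(t_n),\alpha(t_n))\sum_{l_1}\int_{t_n}^s\sigma_{(l_1)}(X(t_n),\alpha(t_n))\,dW_{l_1}$ is precisely the integrand in the third line of \eqref{eq:scheme}, and the discrepancy between the exact and the frozen integrand is controlled in $\mathcal{L}^2$ by the Lipschitz bounds on $\mathcal{D}\sigma_{(l)}$ and on $\mathcal{D}\sigma_{(l)}\sigma_{(l_1)}$ from H-3, yielding the correct local order after the outer $dW_l(s)$ integration.

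For the switching change I would exploit the independence of $\alpha$ from $W$ together with the transition estimates of the chain, which for $h<1/(2q)$ give $P(N_n\ge 1)=O(h)$ and $P(N_n\ge 2)=O(h^2)$ uniformly in $n$. On the event $\{N_n=1\}$ the switching change is captured to leading order by the correction in the fourth line of \eqref{eq:scheme} evaluated at the first jump time $\tau_1^n$, while the event $\{N_n\ge 2\}$ contributes only at order $h^2$ and is absorbed into the remainder. Combining the diffusive and switching pieces produces an exact representation of $X(t)-\bar Y(t)$ as a drift-type integral, a martingale part, and higher-order remainders.

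The final step is a Gronwall argument: squaring the error representation, taking suprema and expectations, and applying the BDG inequality term by term, I would arrive at
\[
E\Big(\sup_{s\le t}|X(s)-\bar Y(s)|^2\Big)\le C\int_0^t E\Big(\sup_{u\le s}|X(u)-\bar Y(u)|^2\Big)\,ds+Ch^2,
\]
where the $Ch^2$ collects all the local remainders, which sum to mean-square order $h^2$ once the orthogonality of the martingale increments across subintervals is used. Gronwall's inequality then gives the rate-one bound. The main obstacle I anticipate is the bookkeeping of the switching change: one must show simultaneously that the scheme's jump correction matches the true switching increment on $\{N_n=1\}$ and that the Brownian increment $W_l(t_{n+1})-W_l(\tau_1^n)$ appearing there, paired with coefficient differences evaluated at the random time $\tau_1^n$, can be estimated without destroying the independence structure between $\alpha$ and $W$. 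This is where the decomposition above and the moment bounds on $X$ must be combined most carefully.
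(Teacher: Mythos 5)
Your overall architecture is genuinely close to the paper's: both avoid It\^o--Taylor, both control first-order Taylor remainders through the Lipschitz bounds on derivatives in H-3, both use the jump-count estimates $P(N_n\geq N)\leq q^Nh^N$, and both close with Burkholder--Davis--Gundy plus Gronwall. For the \emph{diffusion} coefficient your plan works, because every remainder you create there (the frozen-versus-exact integrand discrepancy, the Taylor remainder, and the switching mismatch on $\{N_n=1\}$ and $\{N_n\geq 2\}$) sits inside the outer integral $\int_{t_n}^{t_{n+1}}(\cdots)\,dW_l(s)$; hence the partial sums over $n$ are martingales with respect to $\{\mathscr{F}_T^\alpha\vee\mathscr{F}_{t_n}^W\}$ (independence of $\alpha$ and $W$ keeps $W$ a Brownian motion in the enlarged filtration), so a per-interval second moment of order $h^3$ sums to $Ch^2$. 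This reproduces, in different packaging, the paper's estimates of $R_n(6)$--$R_n(12)$.

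The genuine gap is the drift. The consistency error $\sum_{n}\int_{t_n}^{t_{n+1}}\big(b(X(s),\alpha(s))-b(X_n,\alpha_n)\big)\,ds$ has no outer stochastic integral, so its switching part $\int_{t_n}^{t_{n+1}}\big(b(X(s),\alpha(s))-b(X(s),\alpha_n)\big)\,ds$ is \emph{not} a martingale increment and is \emph{not} conditionally centered, while your only stated tools for switching are the unconditional bounds $P(N_n\geq 1)=O(h)$, $P(N_n\geq 2)=O(h^2)$ and ``orthogonality of martingale increments.'' These do not suffice: per interval this term has second moment of order $h^3$ (a jump indicator of probability $O(h)$ times $h^2$), and summing the $n_T=T/h$ non-orthogonal terms by Cauchy--Schwarz yields only $O(h)$, i.e.\ rate $1/2$, not rate $1$. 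The missing idea is precisely what the paper's Lemma \ref{l for applying mvt} supplies: decompose the switching change of $b$ against the chain's compensated martingales, producing a $dM_{i_0j_0}$-integral term ($R_n(1)$, a true martingale whose $O(h^3)$ per-interval moments sum to $Ch^2$ by BDG) and a compensator term with integrand $q_{\alpha(u-)j_0}\big(b(X(u),j_0)-b(X(u),\alpha(u-))\big)\,du$ ($R_n(2)$, pointwise $O(1)$ but integrated twice in time, hence $O(h^2)$ per interval and $O(h^2)$ in total by Cauchy--Schwarz). Equivalently, you would have to center each local drift-switching error at its conditional mean given $\mathscr{F}_{t_n}$ and prove that this bias is $O(h^2)$ per step using the generator $Q$; some such compensation argument must be added to your proposal, otherwise your scheme of estimates cannot deliver the rate-one bound claimed in the theorem.
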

\section{\bf{Derivation of  Milstein Scheme.}}
In this section, we explain our ideas of detailed derivation of the Milstein scheme. This forms the motivation for reducing the regularity requirement on the coefficients, which is the main achievement in this article. More precisely, we assume that $b(\cdot,i_0)$ and $\sigma(\cdot,i_0)$ are once differentiable for every $i_0 \in \mathcal{S}$ whereas authors in \cite{nguyen2017} assume that they are twice differentiable. The following new way of deriving the Milstein scheme achieves this objective. First, we define the martingale associated with the chain $\alpha$ as introduced in \cite{nguyen2017}. For each $i_0, j_0 \in \mathcal{S}$, $i_0\neq j_0$, define, 
\begin{align*}
[M_{i_0j_0}](t)&:=\sum_{0\leq s\leq t}\mathbbm{1}_{\{\alpha(s-)=i_0\}} \mathbbm{1}_{\{\alpha(s)=j_0\}}
\\
\langle M_{i_0j_0}\rangle(t)&:=\int_0^t q_{i_0j_0}\mathbbm{1}_{\{\alpha(s-)=i_0\}} ds
\\
M_{i_0j_0}(t)&:= [M_{i_0 j_0}](t)-\langle M_{i_0j_0}\rangle(t)
\end{align*}
almost surely for any $t\in[0,T]$. The processes $\{[M_{i_0 j_0}](t); \, t\in[0,T]\}$ and $\{\langle M_{i_0j_0}\rangle(t); \, t\in[0,T]\}$ are respectively optional and predictable quadratic variations whereas $\{M_{i_0j_0}(t); \, t\in[0,T]\}$ is a purely discontinuous and square integrable martingale with respect to filtration $\{\mathscr{F}_{t}^\alpha; \, t \in [0,T]\}$ with $M_{i_0j_0}(0)=0$ almost surely. For notational convenience, take $M_{i_0i_0}(t)=0$ for any $i_0\in\mathcal{S}$ and $t\in[0,T]$.   First, we prove the following useful lemma.     
\begin{lemma}\label{l for applying mvt}
Let $\tau_1< \tau_2< \ldots< \tau_\nu$ be the times of jumps of the chain $\alpha$ in the interval $(r,t)$ for any $0\leq r<t\leq T$, where $t$ may or may not be the jump time of the chain and $\nu$ depends on $r,t$ i.e. $\nu:=\nu(r,t)$. Define $\tau_0:=r$ and $\tau_{\nu+1}:=t$. Also, suppose that $g(\cdot,i_0):\mathbb{R}^d \to \mathbb{R}^d$ is  function for every $i_0 \in\mathcal{S}$. Then, one has,  
\begin{align*}
g(X(t),\alpha(t))&-g(X(r), \alpha(r))=\sum_{i_0\neq j_0}\int_r^t(g(X(u),j_0)-g(X(u),i_0))dM_{i_0j_0}(u)
\\
&+\sum_{j_0\in\mathcal{S}}\int_r^t q_{\alpha(u-)j_0}\big(g(X(u),j_0)-g(X(u),\alpha(u-))\big)du
\\
&+ \sum_{k =0}^{\nu(r,t)}\Big( g(X(\tau_{k+1}),\alpha(\tau_{k}))-g(X(\tau_{k}),\alpha(\tau_{k}))  \Big)
\end{align*}
almost surely for any $0\leq r<t\leq T$.  
\end{lemma}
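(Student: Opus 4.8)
The plan is to establish the identity path-wise by telescoping along the jump times of $\alpha$ and then recognising the resulting sums as the three integrals on the right-hand side. First I would use the partition $\tau_0=r<\tau_1<\cdots<\tau_\nu<\tau_{\nu+1}=t$ induced by the jumps and write the total increment as the telescoping sum
\begin{align*}
g(X(t),\alpha(t))-g(X(r),\alpha(r))=\sum_{k=0}^{\nu}\Big(g(X(\tau_{k+1}),\alpha(\tau_{k+1}))-g(X(\tau_{k}),\alpha(\tau_{k}))\Big),
\end{align*}
which holds trivially since all intermediate terms cancel.

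Next, for each $k$ I would split the summand into a \emph{switching} increment and a \emph{diffusion} increment by inserting the mixed term $g(X(\tau_{k+1}),\alpha(\tau_k))$:
\begin{align*}
g(X(\tau_{k+1}),\alpha(\tau_{k+1}))-g(X(\tau_{k}),\alpha(\tau_{k}))
&=\Big(g(X(\tau_{k+1}),\alpha(\tau_{k+1}))-g(X(\tau_{k+1}),\alpha(\tau_{k}))\Big)\\
&\quad+\Big(g(X(\tau_{k+1}),\alpha(\tau_{k}))-g(X(\tau_{k}),\alpha(\tau_{k}))\Big).
\end{align*}
Summed over $k$, the second group is already exactly the third (diffusion) sum in the statement, so no further work is needed there. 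The key observation for the first group is that $\alpha$ is constant on each half-open interval $[\tau_k,\tau_{k+1})$, so $\alpha(\tau_k)=\alpha(\tau_{k+1}-)$ for every $k=0,\ldots,\nu$ (including $k=\nu$, since there are no jumps in $(\tau_\nu,t)$). Hence the switching part reduces to a sum over the jump times of $g(X(\tau_{k+1}),\alpha(\tau_{k+1}))-g(X(\tau_{k+1}),\alpha(\tau_{k+1}-))$, i.e. the jumps of $u\mapsto g(X(u),\alpha(u))$ coming solely from the chain.

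I would then encode this jump sum as an integral against the optional quadratic variation. Since $[M_{i_0j_0}]$ increases by one precisely at each transition of $\alpha$ from $i_0$ to $j_0$, the contribution $g(X(s),j_0)-g(X(s),i_0)$ at a jump time $s$ with $\alpha(s-)=i_0$, $\alpha(s)=j_0$ is captured by $d[M_{i_0j_0}]$, giving
\begin{align*}
\sum_{i_0\neq j_0}\int_r^t\big(g(X(u),j_0)-g(X(u),i_0)\big)\,d[M_{i_0j_0}](u)
\end{align*}
for the switching part. Finally, using $M_{i_0j_0}=[M_{i_0j_0}]-\langle M_{i_0j_0}\rangle$ I would split this into the martingale integral against $dM_{i_0j_0}$ (the first sum) plus the compensator integral against $d\langle M_{i_0j_0}\rangle(u)=q_{i_0j_0}\mathbbm{1}_{\{\alpha(u-)=i_0\}}\,du$; for each $u$ the indicator selects the single value $i_0=\alpha(u-)$, collapsing $\sum_{i_0\neq j_0}$ into $\sum_{j_0\in\mathcal{S}}q_{\alpha(u-)j_0}(g(X(u),j_0)-g(X(u),\alpha(u-)))$ (the $j_0=\alpha(u-)$ term vanishing), which is precisely the second sum.

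The main obstacle I anticipate is not analytic but a matter of bookkeeping: one must be scrupulous with the c\`adl\`ag conventions and the treatment of the endpoints $r$ and $t$. In particular, the integral $\int_r^t d[M_{i_0j_0}]$ must be read over $(r,t]$ so that it matches the telescoping sum exactly when $t$ itself is a jump time, and one must confirm that the $k=\nu$ term $g(X(t),\alpha(t))-g(X(t),\alpha(\tau_\nu))$ is correctly identified as the jump of $\alpha$ at $t$ (vanishing when $t$ is not a jump time). Verifying that the sum over the finitely many jumps and the path-wise Lebesgue--Stieltjes and martingale integrals genuinely coincide, rather than merely matching formally, is the step that demands the most care.
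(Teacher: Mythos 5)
Your proposal is correct and takes essentially the same route as the paper's own proof: telescoping over the jump times, inserting the mixed term $g(X(\tau_{k+1}),\alpha(\tau_k))$ to separate switching from diffusion increments, identifying the switching sum with the integral against $d[M_{i_0j_0}]$, and splitting that into the martingale integral $dM_{i_0j_0}$ plus the compensator $d\langle M_{i_0j_0}\rangle(u)=q_{i_0j_0}\mathbbm{1}_{\{\alpha(u-)=i_0\}}du$. The only cosmetic difference is direction: the paper starts from the $dM_{i_0j_0}$ integral and expands it as $d[M_{i_0j_0}]$ minus $d\langle M_{i_0j_0}\rangle$, whereas you go forward from the jump sum, so the content is identical.
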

\begin{proof}
First, one  writes, 
\begin{align}
g(X(t),\alpha(t))&-g(X(r), \alpha(r))=g(X(\tau_{\nu+1}),\alpha(\tau_{\nu+1}))-g(X(\tau_{0}), \alpha(\tau_{0})) \notag
\\
& =\sum_{k=0}^{\nu}\bigg( g(X(\tau_{k+1}),\alpha(\tau_{k+1}))-g(X(\tau_{k}), \alpha(\tau_{k}))\bigg) \notag
\\
& =\sum_{k=0}^{\nu}\bigg( g(X(\tau_{k+1}),\alpha(\tau_{k+1}))-g(X(\tau_{k+1}),\alpha(\tau_{k}))\bigg) \notag
\\
&\qquad+ \sum_{k=0}^{\nu}\bigg(g(X(\tau_{k+1}),\alpha(\tau_{k})) -g(X(\tau_{k}), \alpha(\tau_{k}))\bigg) \label{eq:g:lem}
\end{align}
almost surely. For the first term on the right hand side of the above equation, one  observes, 
\begin{align*}
\sum_{i_0\neq j_0}\int_r^t & \Big(g(X(u),j_0)-g(X(u),i_0)\Big)dM_{i_0j_0}(u)
\\
& =\sum_{i_0\neq j_0}\int_r^t \Big(g(X(u),j_0)-g(X(u),i_0)\Big)d[ M_{i_0j_0}](u)
\\
&\qquad- \sum_{i_0\neq j_0}\int_r^t \Big(g(X(u),j_0)-g(X(u),i_0)\Big)d\langle M_{i_0j_0}\rangle(u)
\\
&=\sum_{k=0}^{\nu}\bigg( g(X(\tau_{k+1}),\alpha(\tau_{k+1}))-g(X(\tau_{k+1}),\alpha(\tau_{k}))\bigg)
\\
&\qquad - \sum_{i_0\neq j_0} \int_r^t q_{i_0j_0}\mathbbm{1}_{\{\alpha(u-)=i_0\}}\Big(g(X(u),j_0)-g(X(u),i_0)\Big)du
\\
&=\sum_{k=0}^{\nu}\bigg( g(X(\tau_{k+1}),\alpha(\tau_{k+1}))-g(X(\tau_{k+1}),\alpha(\tau_{k}))\bigg)
\\
&\qquad - \sum_{ j_0\in\mathcal{S}} \int_r^t q_{\alpha(u-) j_0}\Big(g(X(u),j_0)-g(X(u),\alpha(u-))\Big)du
\end{align*}
which on using in equation \eqref{eq:g:lem} completes the proof.
\end{proof}
One can now proceed with the derivation of the Milstein scheme \eqref{eq:scheme}. Here, we remark that the following calculations are done carefully without using  It\^o's formula in such a way that the coefficients $b(\cdot,i_0)$ and $\sigma(\cdot, i_0)$ are assumed to be  only once differentiable for every $i_0\in\mathcal{S}$, which are weaker regularity assumptions than those made in \cite{nguyen2017}. Let $N_n$ denotes the number of jumps  and $\tau^n_1<\tau^n_2<\ldots<\tau^n_{N_n}$ be the jump times of the chain $\alpha$ in the interval $(t_n,t_{n+1}]$ for any $n=0,1,\ldots,n_T-1$. For notational convenience, take    $\tau_0^n=t_n$, $\tau_{N_n+1}^n=t_{n+1}$,  $\alpha_n=\alpha(t_n)$ and $X_n=X(t_n)$ for any $n=0,1,\ldots,n_T$. Let us write the SDEwMS \eqref{eq:sdems} in the following form, 
\begin{align}
X_{n+1}=&X_{n}+\int_{t_n}^{t_{n+1}}b(X(s),\alpha(s))ds+\sum_{l=1}^m\int_{t_n}^{t_{n+1}}\sigma_{(l)}(X(s),\alpha(s))dW_l(s)\notag 
\\
=&X_{n}+\int_{t_n}^{t_{n+1}}b(X_n,\alpha_n)ds+\sum_{l=1}^m\int_{t_n}^{t_{n+1}}\sigma_{(l)}(X_n,\alpha_n)dW_l(s)\notag
\\
&+\int_{t_n}^{t_{n+1}}\Big(b(X(s),\alpha(s))-b(X_n,\alpha_n)\Big)ds\notag
\\
&+\sum_{l=1}^m\int_{t_n}^{t_{n+1}}\Big(\sigma_{(l)}(X(s),\alpha(s))-\sigma_{(l)}(X_n,\alpha_n)\Big)dW_l(s)\label{breaking of b and sigma}
\end{align}
almost surely for any $n=0,1,\ldots,n_T-1$. Now, one uses Lemma \ref{l for applying mvt} with $g=b$, $t=s$ and $r=t_n$ to obtain the following, 
\begin{align}
b(X(s),&\alpha(s))-b(X_n,\alpha_n)=\sum_{i_0\neq j_0}\int_{t_n}^s\Big(b(X(u),j_0)-b(X(u),i_0)\Big)dM_{i_0j_0}(u)\notag
\\
&+\sum_{j_0\in\mathcal{S}}\int_{t_n}^s q_{\alpha(u-)j_0}\Big(b(X(u),j_0)-b(X(u),\alpha(u-))\Big)du \notag
\\
&+\sum_{k =0}^{\nu(t_n,s)}\Big( b(X(\tau^n_{k+1}),\alpha(\tau^n_{k}))-b(X(\tau^n_{k}),\alpha(\tau^n_{k}))  \Big)\notag 
\\
=&\sum_{i_0\neq j_0}\int_{t_n}^s\Big(b(X(u),j_0)-b(X(u),i_0)\Big)dM_{i_0j_0}(u)\notag
\\
&+\sum_{j_0\in\mathcal{S}}\int_{t_n}^s q_{\alpha(u-)j_0}\Big(b(X(u),j_0)-b(X(u),\alpha(u-))\Big)du \notag
\\
+ &\sum_{k =0}^{\nu(t_n,s)}\Big( b(X(\tau^n_{k+1}),\alpha(\tau^n_{k}))-b(X(\tau^n_{k}),\alpha(\tau^n_{k})) \notag
\\
&\qquad- \mathcal{D} b(X(\tau^n_{k}),\alpha(\tau^n_{k}))( X(\tau^n_{k+1})-X(\tau^n_{k})) \Big) \notag
\\
&+\sum_{k =0}^{\nu(t_n,s)}\int_{\tau^n_{k}}^{\tau^n_{k+1}}\mathcal{D}b(X(\tau^n_{k}),\alpha(\tau^n_{k}))b(X(u),\alpha(u))du \notag 
\\
&+\sum_{k =0}^{\nu(t_n,s)} \sum_{l=1}^m\int_{\tau^n_{k}}^{\tau^n_{k+1}}\mathcal{D} b(X(\tau^n_{k}),\alpha(\tau^n_{k})) \sigma_{(l)}(X(u),\alpha(u))dW_{l}(u)
\label{b by using lemma of for applying mvt}
\end{align}
and similarly for the last term of equation \eqref{breaking of b and sigma} with $g=\sigma_{(l)}$, $t=s$ and $r=t_n$,   which on substituting in equation \eqref{breaking of b and sigma} yields the following, 
\begin{align}
X_{n+1}&=X_{n}+\int_{t_n}^{t_{n+1}}b(X_n,\alpha_n)ds+\sum_{l=1}^m\int_{t_n}^{t_{n+1}}\sigma_{(l)}(X_n,\alpha_n)dW_l(s)\notag
\\
& + \int_{t_n}^{t_{n+1}} \sum_{i_0\neq j_0}\int_{t_n}^s\Big(b(X(u),j_0)-b(X(u),i_0)\Big)dM_{i_0j_0}(u) ds\notag
\\
&+\int_{t_n}^{t_{n+1}} \sum_{j_0\in\mathcal{S}}\int_{t_n}^s q_{\alpha(u-)j_0}\Big(b(X(u),j_0)-b(X(u),\alpha(u-))\Big)du ds \notag
\\
& + \int_{t_n}^{t_{n+1}} \sum_{k =0}^{\nu(t_n,s)}\Big( b(X(\tau^n_{k+1}),\alpha(\tau^n_{k}))-b(X(\tau^n_{k}),\alpha(\tau^n_{k}))  \notag
\\
&\qquad- \mathcal{D} b(X(\tau^n_{k}),\alpha(\tau^n_{k}))( X(\tau^n_{k+1})-X(\tau^n_{k})) \Big) ds \notag
\\
&+\int_{t_n}^{t_{n+1}} \sum_{k =0}^{\nu(t_n,s)} \int_{\tau^n_{k}}^{\tau^n_{k+1}}\mathcal{D} b(X(\tau^n_{k}),\alpha(\tau^n_{k}))b(X(u),\alpha(u))du ds \notag 
\\
&+\int_{t_n}^{t_{n+1}} \sum_{k =0}^{\nu(t_n,s)}\sum_{l=1}^m\int_{\tau^n_{k}}^{\tau^n_{k+1}}\mathcal{D} b(X(\tau^n_{k}),\alpha(\tau^n_{k})) \sigma_{(l)}(X(u),\alpha(u))dW_{l}(u) ds \notag
\\
& + \sum_{l=1}^m\int_{t_n}^{t_{n+1}} \sum_{i_0\neq j_0}\int_{t_n}^s\Big(\sigma_{(l)}(X(u),j_0)-\sigma_{(l)}(X(u),i_0)\Big)dM_{i_0j_0}(u) dW_l(s) \notag
\\
&+\sum_{l=1}^m\int_{t_n}^{t_{n+1}}  \sum_{j_0\in\mathcal{S}}\int_{t_n}^s q_{\alpha(u-)j_0}\Big(\sigma_{(l)}(X(u),j_0)-\sigma_{(l)} (X(u),\alpha(u-))\Big)du  dW_l(s) \notag
\\
& + \sum_{l=1}^m\int_{t_n}^{t_{n+1}}  \sum_{k =0}^{\nu(t_n,s)}\Big( \sigma_{(l)}(X(\tau^n_{k+1}),\alpha(\tau^n_{k}))-\sigma_{(l)}(X(\tau^n_{k}),\alpha(\tau^n_{k})) \notag
\\
&\qquad -\mathcal{D} \sigma_{(l)}(X(\tau^n_{k}),\alpha(\tau^n_{k}))( X(\tau^n_{k+1})-X(\tau^n_{k})) \Big) dW_l(s) \notag
\\
&+\sum_{l=1}^m\int_{t_n}^{t_{n+1}}  \sum_{k =0}^{\nu(t_n,s)}\int_{\tau^n_{k}}^{\tau^n_{k+1}} \mathcal{D} \sigma_{(l)}(X(\tau^n_{k}),\alpha(\tau^n_{k}))b(X(u),\alpha(u))du dW_l(s) \notag 
\\
&+\sum_{l=1}^m\int_{t_n}^{t_{n+1}}  \sum_{k =0}^{\nu(t_n,s)} \sum_{l_1=1}^m\int_{\tau^n_{k}}^{\tau^n_{k+1}} \Big(\mathcal{D} \sigma_{(l)}(X(\tau^n_{k}),\alpha(\tau^n_{k})) \sigma_{(l_1)}(X(u),\alpha(u)) \notag
\\
&\qquad - \mathcal{D} \sigma_{(l)}(X_n,\alpha_n) \sigma_{(l_1)}(X_n,\alpha_n) \Big) dW_{l_1}(u) dW_l(s) \notag
\\
&+\sum_{l=1}^m\int_{t_n}^{t_{n+1}}  \sum_{k =0}^{\nu(t_n,s)} \sum_{l_1=1}^m\int_{\tau^n_{k}}^{\tau^n_{k+1}} \mathcal{D} \sigma_{(l)}(X_n,\alpha_n)\sigma_{(l_1)}(X_n,\alpha_n) dW_{l_1}(u) dW_l(s) \label{Scheme der. without two terms}
\end{align}
almost surely for any $n=0,1,\ldots,n_T-1$. The last term on the right hand side of  equation \eqref{Scheme der. without two terms} can be written as, 
\begin{align}
\sum_{l=1}^m\int_{t_n}^{t_{n+1}}  \sum_{k =0}^{\nu(t_n,s)} \sum_{l_1=1}^m\int_{\tau^n_{k}}^{\tau^n_{k+1}} \mathcal{D} \sigma_{(l)}(X_n,\alpha_n) \sigma_{(l_1)}(X_n,\alpha_n) dW_{l_1}(u) dW_l(s) \notag
\\
=\sum_{l,l_1=1}^m\int_{t_n}^{t_{n+1}}  \int_{t_n}^{s} \mathcal{D} \sigma_{(l)}(X_n,\alpha_n) \sigma_{(l_1)}(X_n,\alpha_n) dW_{l_1}(u) dW_l(s) \label{eq:last}
\end{align}
for any $n=0,1,\ldots,n_T-1$. Also, the ninth term on the right hand side of equation \eqref{Scheme der. without two terms} can be expressed as follows, 
\begin{align}
& \sum_{l=1}^m \int_{t_n}^{t_{n+1}}\sum_{i_0\neq j_0}\int_{t_n}^s\Big(\sigma_{(l)}(X(u),j_0)-\sigma_{(l)}(X(u),i_0)\Big)dM_{i_0j_0}(u)dW_l(s)\notag
\\
=&\sum_{l=1}^m \int_{t_n}^{t_{n+1}}\sum_{i_0\neq j_0}\int_{t_n}^s\Big(\sigma_{(l)}(X(u),j_0)-\sigma_{(l)}(X(u),i_0)\Big)d[M_{i_0j_0}](u)dW_l(s)\notag
\\
&+\sum_{l=1}^m \int_{t_n}^{t_{n+1}}\sum_{i_0\neq j_0}\int_{t_n}^s\Big(\sigma_{(l)}(X(u),i_0)-\sigma_{(l)}(X(u),j_0)\Big)d\langle M_{i_0j_0}\rangle(u)dW_l(s)\notag
\\
=&\sum_{l=1}^m \mathbbm{1}_{\{N_n=1 \}} \Big(\sigma_{(l)}(X(\tau_1^n),\alpha_{n+1})-\sigma_{(l)}(X(\tau_1^n),\alpha_n)\Big)\Big(W_l(t_{n+1})-W_l(\tau_1^n)\Big)\notag 
\\
&+\sum_{l=1}^m  \mathbbm{1}_{\{N_n\geq 2\}} \int_{t_n}^{t_{n+1}} \sum_{i_0\neq j_0} \int_{t_n}^s\Big(\sigma_{(l)}(X(u),j_0)-\sigma_{(l)}(X(u),i_0)\Big)d[M_{i_0j_0}](u)dW_l(s)\notag
\\
&+\sum_{l=1}^m \int_{t_n}^{t_{n+1}} \sum_{i_0\neq j_0} \int_{t_n}^s\Big(\sigma_{(l)}(X(u),i_0)-\sigma_{(l)}(X(u),j_0)\Big)d\langle M_{i_0j_0}\rangle(u)dW_l(s)\notag
\\
=&\sum_{l=1}^m  \mathbbm{1}_{\{N_n=1 \}} \Big( \Big( \sigma_{(l)}(X(\tau_1^n),\alpha_{n+1})-\sigma_{(l)}(X_n,\alpha_{n+1})\Big)\notag
\\
&\qquad -\Big( \sigma_{(l)}(X(\tau_1^n),\alpha_{n})-\sigma_{(l)}(X_n,\alpha_{n})\Big) \Big) \Big(W_l(t_{n+1})-W_l(\tau_1^n)\Big) \notag
\\
&+\sum_{l=1}^m \mathbbm{1}_{\{N_n=1 \}} \Big(\sigma_{(l)}(X_n,\alpha_{n+1})-\sigma_{(l)}(X_n,\alpha_n)\Big)\Big(W_l(t_{n+1})-W_l(\tau_1^n)\Big)\notag 
\\
&+\sum_{l=1}^m \mathbbm{1}_{\{N_n\geq 2\}} \int_{t_n}^{t_{n+1}} \sum_{i_0\neq j_0} \int_{t_n}^s\Big(\sigma_{(l)}(X(u),j_0)-\sigma_{(l)}(X(u),i_0)\Big)d[M_{i_0j_0}](u)dW_l(s)\notag
\\
&+\sum_{l=1}^m \int_{t_n}^{t_{n+1}} \sum_{i_0\neq j_0} \int_{t_n}^s\Big(\sigma_{(l)}(X(u),i_0)-\sigma_{(l)}(X(u),j_0)\Big)d\langle M_{i_0j_0}\rangle(u)dW_l(s) \label{scheme last term expression}
\end{align}
almost surely for any $n=0,1,\ldots, n_T-1$. On substituting values from equations \eqref{eq:last} and \eqref{scheme last term expression} in equation \eqref{Scheme der. without two terms}, one obtains, 
\begin{align}
&X_{n+1}=X_{n}+b(X_n,\alpha_n)h+\sum_{l=1}^m \sigma_{(l)}(X_n,\alpha_n)\Delta_n W_l\notag
\\
&+\sum_{l,l_1=1}^m\int_{t_n}^{t_{n+1}}  \int_{t_n}^{s} \mathcal{D} \sigma_{(l)}(X_n,\alpha_n) \sigma_{(l_1)}(X_n,\alpha_n) dW_{l_1}(u) dW_l(s) \notag
\\
&+\sum_{l=1}^m\mathbbm{1}_{\{N_n=1 \}}\Big(\sigma_{(l)}(X_n,\alpha_{n+1})-\sigma_{(l)}(X_n,\alpha_n)\Big)\Big(W_l(t_{n+1})-W_l(\tau_1^n)\Big)  +\sum_{i=1}^{12}R_{n}(i) \label{scheme complete derivation} 
\end{align}
almost surely for any $n=0,1,\ldots, n_T-1$. The Milstein scheme \eqref{eq:scheme} is constructed from the above equation by ignoring the remainder terms  $R_{n}(i)$ for $i=1,\ldots,12$. The remainder terms in the above equation \eqref{scheme complete derivation} are defined as below,  
\begin{align*}
R_n(1)&:=\int_{t_n}^{t_{n+1}} \sum_{i_0\neq j_0}\int_{t_n}^s\Big(b(X(u),j_0)-b(X(u),i_0)\Big)dM_{i_0j_0}(u) ds 
\\
R_n(2)&:=\int_{t_n}^{t_{n+1}} \sum_{j_0\in\mathcal{S}}\int_{t_n}^s q_{\alpha(u-)j_0}\Big(b(X(u),j_0)-b(X(u),\alpha(u-))\Big)du ds 
\\
R_n(3)&:=\int_{t_n}^{t_{n+1}} \sum_{k =0}^{\nu(t_n,s)}\Big( b(X(\tau^n_{k+1}),\alpha(\tau^n_{k}))-b(X(\tau^n_{k}),\alpha(\tau^n_{k})) \notag
\\
&\qquad - \mathcal{D} b(X(\tau^n_{k}),\alpha(\tau^n_{k}))( X(\tau^n_{k+1})-X(\tau^n_{k})) \Big) ds 
\\
R_n(4)&:=\int_{t_n}^{t_{n+1}} \sum_{k =0}^{\nu(t_n,s)} \int_{\tau^n_{k}}^{\tau^n_{k+1}}\mathcal{D} b(X(\tau^n_{k}),\alpha(\tau^n_{k}))b(X(u),\alpha(u))du ds 
\\
R_n(5)&:=\int_{t_n}^{t_{n+1}} \sum_{k =0}^{\nu(t_n,s)}\sum_{l=1}^m\int_{\tau^n_{k}}^{\tau^n_{k+1}}\mathcal{D} b(X(\tau^n_{k}),\alpha(\tau^n_{k})) \sigma_{(l)}(X(u),\alpha(u))dW_{l}(u) ds 
\\
R_n(6)&:= \sum_{l=1}^m  \mathbbm{1}_{\{N_n=1 \}} \Big( \sigma_{(l)}(X(\tau_1^n),\alpha_{n+1})-\sigma_{(l)}(X_n,\alpha_{n+1})
\\
&\qquad - \sigma_{(l)}(X(\tau_1^n),\alpha_{n})+\sigma_{(l)}(X_n,\alpha_{n}) \Big) \Big(W_l(t_{n+1})-W_l(\tau_1^n)\Big) 
\\
R_n(7)&:=\sum_{l=1}^m \mathbbm{1}_{\{N_n\geq 2\}} \int_{t_n}^{t_{n+1}} \sum_{i_0\neq j_0} \int_{t_n}^s\Big(\sigma_{(l)}(X(u),j_0)-\sigma_{(l)}(X(u),i_0)\Big)d[M_{i_0j_0}](u)dW_l(s)
\\
R_n(8)&:= \sum_{l=1}^m \int_{t_n}^{t_{n+1}} \sum_{i_0\neq j_0} \int_{t_n}^s\Big(\sigma_{(l)}(X(u),i_0)-\sigma_{(l)}(X(u),j_0)\Big)d\langle M_{i_0j_0}\rangle(u)dW_l(s)   
\\
R_n(9)&:=\sum_{l=1}^m\int_{t_n}^{t_{n+1}}  \sum_{j_0\in\mathcal{S}}\int_{t_n}^s q_{\alpha(u-)j_0}\Big(\sigma_{(l)}(X(u),j_0)-\sigma_{(l)} (X(u),\alpha(u-))\Big)du  dW_l(s) 
\\
R_n(10)&:=  \sum_{l=1}^m\int_{t_n}^{t_{n+1}}  \sum_{k =0}^{\nu(t_n,s)}\Big( \sigma_{(l)}(X(\tau^n_{k+1}),\alpha(\tau^n_{k}))-\sigma_{(l)}(X(\tau^n_{k}),\alpha(\tau^n_{k})) 
\\
&\qquad -\mathcal{D} \sigma_{(l)}(X(\tau^n_{k}),\alpha(\tau^n_{k}))( X(\tau^n_{k+1})-X(\tau^n_{k})) \Big) dW_l(s)
\\
R_n(11)&:=  \sum_{l=1}^m\int_{t_n}^{t_{n+1}}  \sum_{k =0}^{\nu(t_n,s)}\int_{\tau^n_{k}}^{\tau^n_{k+1}} \mathcal{D} \sigma_{(l)}(X(\tau^n_{k}),\alpha(\tau^n_{k}))b(X(u),\alpha(u))du dW_l(s)  
\\
R_n(12)&:= \sum_{l=1}^m\int_{t_n}^{t_{n+1}}  \sum_{k =0}^{\nu(t_n,s)} \sum_{l_1=1}^m\int_{\tau^n_{k}}^{\tau^n_{k+1}} \Big(\mathcal{D} \sigma_{(l)}(X(\tau^n_{k}),\alpha(\tau^n_{k})) \sigma_{(l_1)}(X(u),\alpha(u)) 
\\
& - \mathcal{D} \sigma_{(l)}(X_n,\alpha_n) \sigma_{(l_1)}(X_n,\alpha_n) \Big) dW_{l_1}(u) dW_l(s)  
\end{align*}
almost surely for any $n=0,1,\ldots, n_T-1$.  
\section{\bf{Moment Bound.}}
The proof of the following lemma can be found in \cite{mao2006} \textit{e.g.}, Theorems [3.3.16, 3.3.23, 3.3.24]. 
\begin{lemma}\label{l true moment and 1/2 rate}
Let Assumptions H-1 and H-2 be satisfied. Then, there exists a unique continuous solution $\{X(t); \{t\in [0,T]\}\}$ of SDEwMS \eqref{eq:sdems}. Moreover, the following hold, 
\begin{align*}
E\bigg(\sup_{t\in [0,T]}|X(t)|^{p}\Big|\mathscr{F}_T^{\alpha}\bigg)& \leq C
\\
E\bigg( \sup_{t\in [s,s+h]}|X(t)-X(s)|^p\Big|\mathscr{F}_T^{\alpha}  \bigg)& \leq Ch^{p/2} 
\end{align*}
where the positive constant $C$ is independent of $h$. 
\end{lemma}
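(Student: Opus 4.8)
The plan is to exploit the fact that, once we condition on the entire trajectory of the chain, the switching coefficients become deterministic (piecewise-constant-in-state) functions of time, so that \eqref{eq:sdems} reduces to a classical time-inhomogeneous It\^o SDE driven by $W$, whose standard estimates turn out to be uniform in the chain path. For existence and uniqueness I would argue pathwise in $\alpha$: since the chain has only finitely many jumps on $[0,T]$ almost surely, I enumerate its jump times, and on each inter-jump interval the state $i_0$ is fixed so that \eqref{eq:sdems} is an ordinary SDE with globally Lipschitz (Assumption H-2) and linearly growing (Remark \ref{rem:growth}) coefficients $b(\cdot,i_0),\sigma(\cdot,i_0)$. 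Classical Picard iteration then gives a unique continuous strong solution on that interval, and I patch successive intervals together using continuity of $X$ at the jump instants (where only the coefficient index switches, not $X$ itself), producing a unique continuous solution on all of $[0,T]$.

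For the moment bound I would pass to a regular conditional probability given $\mathscr{F}_T^\alpha$. Because $(X(0),W)$ is independent of $\alpha$, under this conditioning $W$ remains a Brownian motion and $X(0)$ keeps its law, while $\alpha(\cdot)$ is now a fixed piecewise-constant path $\omega_\alpha$. Applying It\^o's formula to $(1+|X(t)|^2)^{p/2}$, taking the conditional expectation, and estimating the stochastic-integral term with the Burkholder--Davis--Gundy inequality together with the state-uniform linear growth $|b(x,i_0)|\vee|\sigma(x,i_0)|\le L(1+|x|)$, I obtain for $\phi(t):=E\big(\sup_{u\le t}|X(u)|^p\,\big|\,\mathscr{F}_T^\alpha\big)$ an estimate of the form $\phi(t)\le C\big(1+E|X(0)|^p\big)+C\int_0^t\phi(u)\,du$, with $C$ depending only on $p,L,T$ and not on $\omega_\alpha$. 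A conditional Gronwall inequality then yields $\phi(T)\le C$, the bound $E|X(0)|^p\le L$ from H-1 entering through independence; the decisive point is that every constant is uniform in the chain path, so the bound genuinely controls the conditional expectation.

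The increment bound follows the same conditioning, applied to $X(t)-X(s)=\int_s^t b\,du+\int_s^t\sigma\,dW$ on $[s,s+h]$. The drift contribution is $O(h^{p})$ by H\"older and the diffusion contribution is $O(h^{p/2})$ by BDG, both with state-uniform linear-growth constants multiplied by the moment bound already established; since $p\ge 2$ forces $h^{p}\le h^{p/2}$ for $h\le 1$, the diffusion term dominates and delivers the stated $Ch^{p/2}$, again uniformly in $\omega_\alpha$.

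The main obstacle is technical rather than computational: one must justify that the stochastic-calculus machinery, namely the martingale property of the It\^o integral and the BDG inequality, survives the conditioning on $\mathscr{F}_T^\alpha$, i.e.\ that disintegrating over the chain's path space produces a bona fide SDE estimate with all constants uniform over paths. This is precisely where the independence of $(X(0),W)$ from $\alpha$ built into the model does the essential work, and it is the reason the resulting estimates hold as bounds on the conditional expectation with an $h$-independent constant $C$.
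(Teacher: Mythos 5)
Your argument is correct in outline, but note that the paper does not prove this lemma at all: it is imported verbatim from Mao and Yuan \cite{mao2006} (Theorems [3.3.16, 3.3.23, 3.3.24]), so your self-contained proof is necessarily a different route --- and it actually supplies something the bare citation glosses over. The cited results are stated as \emph{unconditional} moment bounds, whereas the lemma as used downstream (Lemmas \ref{lem:R3R10}--\ref{lem:R1R6R7R8R12}) needs the bounds \emph{conditionally} on $\mathscr{F}_T^\alpha$ with a constant uniform in the chain path; your disintegration argument --- freeze the path of $\alpha$, use independence of $(X(0),W)$ from $\alpha$ so that $W$ remains a Brownian motion, and rerun It\^o/BDG/Gronwall with constants depending only on $p$, $L$, $T$ because the linear-growth constant of Remark \ref{rem:growth} is uniform over the finite state space $\mathcal{S}$ --- is precisely the justification for that strengthening. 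Two points you should make explicit to close the sketch. First, Gronwall requires $\phi(t)=E\big(\sup_{u\le t}|X(u)|^p\,\big|\,\mathscr{F}_T^\alpha\big)$ to be finite a priori: localize with $\tau_R=\inf\{t:|X(t)|\ge R\}$ and pass to the limit by Fatou. Second, rather than invoking regular conditional probabilities (where you would still owe a verification that $X$ solves the frozen-path SDE under the conditional measure, e.g.\ via pathwise uniqueness and measurability of the solution map), it is cleaner to enlarge the filtration: set $\mathscr{G}_t:=\mathscr{F}_t^W\vee\mathscr{F}_T^\alpha$; by independence $W$ is a $\mathscr{G}_t$-Brownian motion, $X$ is $\mathscr{G}_t$-adapted, and since the BDG constants are universal the BDG and Gronwall estimates can be run conditionally on $\mathscr{G}_0\supseteq\mathscr{F}_T^\alpha$ --- this is in fact the same enlarged filtration the paper itself uses for all its martingale arguments. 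With these two points filled in, your pathwise existence-uniqueness by patching over the a.s.\ finitely many jumps of $\alpha$, the moment bound, and the $Ch^{p/2}$ increment estimate (drift $O(h^p)$ by H\"older, diffusion $O(h^{p/2})$ by BDG, with $h\le 1$) are all correct and consistent with what the cited theorems deliver.
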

\begin{lemma}
Let Assumptions H-1, H-2 and H-3 be satisfied. Then, 
\begin{align*}
E\Big(\sup_{n\in \{0,1,\ldots, n_T\}}|Y_n|^2\Big) \leq C
\end{align*}
where the positive constant $C$ does not depend on $h$. 
\end{lemma}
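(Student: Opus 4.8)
The plan is to establish the bound conditionally on the $\sigma$-algebra $\mathscr{F}_T^\alpha$ generated by the entire trajectory of the chain, exactly in the spirit of Lemma \ref{l true moment and 1/2 rate}, and then to take expectation over the chain. The decisive structural fact is that, since $X(0)$, $W$ and $\alpha$ are independent, conditioning on $\mathscr{F}_T^\alpha$ leaves $W$ a Brownian motion with independent increments, while $\alpha_n$, $\alpha_{n+1}$, $N_n$ and, crucially, the first jump time $\tau_1^n$ all become $\mathscr{F}_T^\alpha$-measurable, hence deterministic under the conditioning. I would work with the enlarged discrete filtration $\mathscr{G}_n:=\mathscr{F}_{t_n}\vee\mathscr{F}_T^\alpha$, with respect to which $Y_n$ is adapted.

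First I would iterate \eqref{eq:scheme} to write $Y_n=Y_0+\sum_{k=0}^{n-1}\big(D_k+S_k+I_k+J_k\big)$, where $D_k:=b(Y_k,\alpha_k)h$ is the drift increment, $S_k:=\sum_{l}\sigma_{(l)}(Y_k,\alpha_k)\Delta_k W_l$ the diffusion increment, $I_k$ the iterated-integral (Milstein) increment, and $J_k$ the switching increment carrying $\mathbbm{1}_{\{N_k=1\}}$. Using $|Y_n|^2\le 5\big(|Y_0|^2+|\sum D_k|^2+|\sum S_k|^2+|\sum I_k|^2+|\sum J_k|^2\big)$ and then $\sup_n$, I would treat the four sums separately. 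The drift is of bounded variation: by Cauchy--Schwarz and the linear growth of $b$ from Remark \ref{rem:growth}, $\sup_n|\sum_{k<n}D_k|^2\le Th\sum_k|b(Y_k,\alpha_k)|^2\le C+Ch\sum_k|Y_k|^2$, using $n_Th=T$.

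The three remaining sums are martingales with respect to $\{\mathscr{G}_n\}$: each increment has a $\mathscr{G}_k$-measurable coefficient multiplied by a Wiener factor independent of $\mathscr{G}_k$ and of zero conditional mean, so $E(S_k\mid\mathscr{G}_k)=E(I_k\mid\mathscr{G}_k)=E(J_k\mid\mathscr{G}_k)=0$. I would apply Doob's $L^2$ maximal inequality conditionally, which bounds $E\big(\sup_n|\sum_{k<n}S_k|^2\mid\mathscr{F}_T^\alpha\big)$ by $4\sum_k E(|S_k|^2\mid\mathscr{F}_T^\alpha)$ through orthogonality of the increments, and likewise for the $I_k$ and $J_k$ sums. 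For $S_k$ the It\^o isometry gives $E(|S_k|^2\mid\mathscr{G}_k)=|\sigma(Y_k,\alpha_k)|^2h\le L^2(1+|Y_k|)^2h$; for $I_k$ the isometry together with $|\mathcal{D}\sigma_{(l)}|\le L$ and the growth of $\sigma$ yields a bound of order $h^2(1+|Y_k|)^2$. The switching term $J_k$ is the delicate one, and it is where the conditioning pays off: given $\mathscr{F}_T^\alpha$ the factor $W_l(t_{k+1})-W_l(\tau_1^k)$ is a genuine Brownian increment over the \emph{fixed} subinterval $[\tau_1^k,t_{k+1}]\subseteq[t_k,t_{k+1}]$, hence of zero conditional mean and conditional variance at most $h$, while its coefficient is controlled state-by-state by $|\sigma_{(l)}(Y_k,\alpha_{k+1})|+|\sigma_{(l)}(Y_k,\alpha_k)|\le 2L(1+|Y_k|)$; this gives $E(|J_k|^2\mid\mathscr{F}_T^\alpha)\le C\,\mathbbm{1}_{\{N_k=1\}}h\,E((1+|Y_k|)^2\mid\mathscr{F}_T^\alpha)$.

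Collecting the four estimates and writing $Z_n:=E\big(\sup_{0\le j\le n}|Y_j|^2\mid\mathscr{F}_T^\alpha\big)$, I obtain the conditional recursion $Z_n\le C(1+E|Y_0|^2)+Ch\sum_{k=0}^{n-1}Z_k$ with $C$ independent of $h$ and of the chain. Discrete Gronwall then gives $Z_{n_T}\le C(1+E|Y_0|^2)e^{CT}\le C$, where $E|Y_0|^2\le C$ follows from Assumption H-1 (via $E|X_0|^p\le L$ and $E|X_0-Y_0|^2\le Lh^2$) and the independence of $Y_0$ from $\alpha$. Taking expectation over $\mathscr{F}_T^\alpha$ yields the claim. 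The main obstacle is the rigorous treatment of the switching increment $J_k$: verifying that it is a genuine $\{\mathscr{G}_n\}$-martingale increment rests entirely on turning the random time $\tau_1^n$ into a deterministic one by conditioning on $\mathscr{F}_T^\alpha$, and one must ensure that every constant produced along the way is uniform both in $h$ and in the realization of $\alpha$, so that the final expectation over the chain is harmless.
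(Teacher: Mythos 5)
Your proposal is correct and follows essentially the same route as the paper: the same four-term decomposition of the iterated scheme, the same key observation that the diffusion, iterated-integral and switching sums are $L^2$-martingales with respect to the enlarged filtration $\{\mathscr{F}_T^\alpha\vee\mathscr{F}_{t_n}^W\}$ (so that conditioning on $\mathscr{F}_T^\alpha$ freezes $\tau_1^k$, $N_k$, $\alpha_{k+1}$ and leaves the Wiener increments genuine), followed by linear growth bounds and Gronwall. Your use of conditional Doob's $L^2$ maximal inequality with orthogonal increments is interchangeable with the paper's Burkholder--Davis--Gundy step, and running Gronwall conditionally before integrating over the chain is only a cosmetic variation of the paper's unconditional version.
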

\begin{proof}
First notice that the Milstein scheme \eqref{eq:scheme} can be written as, 
\begin{align} 
Y_{n} & =Y_0+\sum_{k=0}^{n-1} b(Y_k, \alpha_k) h +\sum_{k=0}^{n-1} \sum_{l=1}^m\sigma_{(l)}(Y_k,\alpha_k)\Delta_k W_l\notag
\\
&+\sum_{k=0}^{n-1} \sum_{l,l_1=1}^m\int_{t_k}^{t_{k+1}}  \int_{t_k}^{s} \mathcal{D} \sigma_{(l)}(Y_k,\alpha_k) \sigma_{(l_1)}(Y_k,\alpha_k) dW_{l_1}(u) dW_l(s) \notag 
\\
&+\sum_{k=0}^{n-1} \sum_{l=1}^m \mathbbm{1}_{\{N_k=1\}}\Big(\sigma_{(l)}(Y_k,\alpha_{k+1})-\sigma_{(l)}(Y_k,\alpha_k)\Big)\Big(W_l(t_{k+1})-W_l(\tau_1^k)\Big) \notag
\end{align}
and hence one can obtain the following estimates,  
\begin{align} 
E\Big(&\sup_{n\in\{1,\ldots,n'\}} |Y_{n}|^2\Big)  \leq CE|Y_0|^2+C E\Big(\sup_{n\in\{1,\ldots,n'\}} \Big|\sum_{k=0}^{n-1} b(Y_k, \alpha_k) h\Big|^2\Big) \notag
\\
& +C E\Big(\sup_{n\in\{1,\ldots,n'\}} \Big| \sum_{k=0}^{n-1} \sum_{l=1}^m\sigma_{(l)}(Y_k,\alpha_k)\Delta_k W_l\Big|^2\Big)\notag
\\
&+C E\Big(\sup_{n\in\{1,\ldots,n'\}} \Big| \sum_{k=0}^{n-1} \sum_{l,l_1=1}^m\int_{t_k}^{t_{k+1}}  \int_{t_k}^{s} \mathcal{D} \sigma_{(l)}(Y_k,\alpha_k) \sigma_{(l_1)}(Y_k,\alpha_k) dW_{l_1}(u) dW_l(s) \Big|^2\Big) \notag 
\\
&+C E\Big(\sup_{n\in\{1,\ldots,n'\}} \Big| \sum_{k=0}^{n-1} \sum_{l=1}^m \mathbbm{1}_{\{N_k=1\}}\Big(\sigma_{(l)}(Y_k,\alpha_{k+1})-\sigma_{(l)}(Y_k,\alpha_k)\Big)\notag
\\
&\times \Big(W_l(t_{k+1})-W_l(\tau_1^k)\Big)\Big|^2\Big)  \notag
\\
& =: C E|Y_0|^p + T_1+ T_2+T_3+T_4 \label{eq:T1+T4} 
\end{align}
for any $n'=1,\ldots,n_T$.  For $T_1$, one can use Remark \ref{rem:growth} to obtain, 
\begin{align}
T_1 & := C E\Big(\sup_{n\in\{1,\ldots,n'\}} \Big|\sum_{k=0}^{n-1} b(Y_k, \alpha_k) h\Big|^2\Big)\leq C n'h^2 E\Big( \sum_{k=0}^{n'-1} |b(Y_k, \alpha_k)|^2 \Big) \notag
\\
& \leq C + C h  \sum_{k=0}^{n'-1} E\Big(\sup_{n\in\{0,\ldots,k\}}|Y_n|^2 \Big) \label{eq:T1}
\end{align}
for any $n'=1,\ldots,n_T$. Notice that 
$$
\Big\{\sum_{k=0}^{n-1} \sum_{l=1}^m\sigma_{(l)}(Y_k,\alpha_k)\Delta_k W_l; n\in\{1,\ldots,n_T\}\Big\}
$$
is a  martingale with respect to filtration $\{\mathscr{F}_T^\alpha\vee\mathscr{F}_{t_n}^W;n\in\{1,\ldots,n_T\}\}$. Hence, by Burkholder-Davis-Gundy inequality and Remark \ref{rem:growth}, $T_2$ can be estimates as follows, 
\begin{align}
T_2 & := C E\Big(\sup_{n\in\{1,\ldots,n' \}} \Big| \sum_{k=0}^{n-1} \sum_{l=1}^m\sigma_{(l)}(Y_k,\alpha_k)\Delta_k W_l \Big|^2 \Big)\notag
\\
& \leq C h E\Big(  \sum_{k=0}^{n'-1} \sum_{l=1}^m|\sigma_{(l)}(Y_k,\alpha_k)|^2 \Big) \notag 
\\
& \leq C + C h  \sum_{k=0}^{n'-1} E\Big(\sup_{n\in\{0,\ldots,k\}}|Y_n|^2 \Big) \label{eq:T2}
\end{align}
for any $n'=1,\ldots,n_T$. Further, one can show that 
$$
 \Big\{\sum_{k=0}^{n-1} \sum_{l,l_1=1}^m\int_{t_k}^{t_{k+1}}  \int_{t_k}^{s} \mathcal{D} \sigma_{(l)}(Y_k,\alpha_k) \sigma_{(l_1)}(Y_k,\alpha_k) dW_{l_1}(u) dW_l(s); n\in\{1,\ldots n_T\} \Big\}
$$
is a  martingale with respect to filtration $\{\mathscr{F}_T^\alpha\vee\mathscr{F}_{t_n}^W;n\in\{1,\ldots,n_T\}\}$. Thus, as before, due to Burkholder-Davis-Gundy inequality and Remark \ref{rem:growth}, one obtains 
\begin{align}
T_3 & := C E\Big(\sup_{n\in\{1,\ldots,n'\}} \Big| \sum_{k=0}^{n-1} \sum_{l,l_1=1}^m\int_{t_k}^{t_{k+1}}  \int_{t_k}^{s} \mathcal{D} \sigma_{(l)}(Y_k,\alpha_k) \sigma_{(l_1)}(Y_k,\alpha_k) dW_{l_1}(u) dW_l(s) \Big|^2\Big) \notag 
\\
& \leq C E\Big(\sum_{k=0}^{n'-1} \sum_{l,l_1=1}^m\int_{t_k}^{t_{k+1}}  \int_{t_k}^{s} |\mathcal{D} \sigma_{(l)}(Y_k,\alpha_k) \sigma_{(l_1)}(Y_k,\alpha_k)|^2  du ds \Big) \notag
\\
& \leq C + C h  \sum_{k=0}^{n'-1} E\Big(\sup_{n\in\{0,\ldots,k\}}|Y_n|^2 \Big) \label{eq:T3}
\end{align}
for any $n'=1,\ldots,n_T$. Similarly, one can show that 
\begin{align*}
\Big\{ \sum_{k=0}^{n-1} \sum_{l=1}^m \mathbbm{1}_{\{N_k=1\}}\Big(\sigma_{(l)}(Y_k,\alpha_{k+1})&-\sigma_{(l)}(Y_k,\alpha_k)\Big)\Big(W_l(t_{k+1})-W_l(\tau_1^k)\Big); 
\\
&  n\in\{1,\ldots n_T\}\Big\}
\end{align*}
is a  martingale with respect to filtration $\{\mathscr{F}_T^\alpha\vee\mathscr{F}_{t_n}^W;n\in\{1,\ldots,n_T\}\}$.  Again, on the application of Burkholder-Davis-Gundy inequality and Remark \ref{rem:growth}, one obtains
\begin{align}
T_4& := C E\Big(\sup_{n\in\{1,\ldots,n'\}} \Big| \sum_{k=0}^{n-1} \sum_{l=1}^m \mathbbm{1}_{\{N_k=1\}}\Big(\sigma_{(l)}(Y_k,\alpha_{k+1})-\sigma_{(l)}(Y_k,\alpha_k)\Big)\notag
\\
& \qquad \times \Big(W_l(t_{k+1})-W_l(\tau_1^k)\Big)\Big|^2\Big) \notag
\\
& \leq C E\Big(  \sum_{k=0}^{n'-1} \sum_{l=1}^m \mathbbm{1}_{\{N_k=1\}}\big|\sigma_{(l)}(Y_k,\alpha_{k+1})-\sigma_{(l)}(Y_k,\alpha_k)\big|^2 \notag
\\
& \qquad \times E\big(|W_l(t_{k+1})-W_l(\tau_1^k)\big|^2\big|\mathscr{F}_T^\alpha \vee \mathscr{F}_{t_k}^W \big)\Big) \notag 
\\
& \leq C + C h  \sum_{k=0}^{n'-1} E\Big(\sup_{n\in\{0,\ldots,k\}}|Y_n|^2 \Big) \label{eq:T4}
\end{align}
for any $n'=1,\ldots,n_T$. Substituting the values from \eqref{eq:T1} to \eqref{eq:T4} in \eqref{eq:T1+T4} gives, 
\begin{align}
E\Big(&\sup_{n\in\{0,\ldots,n'\}} |Y_{n}|^2\Big)  \leq CE|Y_0|^2 + C + C h  \sum_{k=0}^{n'-1} E\Big(\sup_{n\in\{0,\ldots,k\}}|Y_n|^2 \Big)  \notag
\end{align}
for any $n'=1,\ldots,n_T$. The application of Gronwall's lemma completes the proof. 
\end{proof}

\section{\bf{Proof of Main Result}.}
\noindent
Before proving the main result stated in Theorem \ref{thm:main}, one requires to establish several lemmas  which now follows.  

\begin{lemma}\label{l mvt}
Let $f(\cdot,i_0):\mathbb{R}^d \rightarrow \mathbb{R}^d$ be a continuously differentiable function and satisfies, for every $i_0\in\mathcal{S}$,  
\begin{align} \label{eq:hyp}
|\mathcal{D}f(x,i_0)-\mathcal{D}f(\tilde{x},i_0)|\leq C |x-\tilde{x}| 
\end{align}
for any $x,\tilde{x}\in \mathbb{R}^d$. Then, for every $i_0\in\mathcal{S}$,
\begin{equation*}
|f(x,i_0)-f(\tilde{x},i_0)-\mathcal{D} f(\tilde{x},i_0)(x-\tilde{x})|\leq C|x-\tilde{x}|^2
\end{equation*}
for any $x,\tilde{x}\in \mathbb{R}^d$. In the above, $C>0$ is  constant. 
\end{lemma}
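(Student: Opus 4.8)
The plan is to prove this as a standard first-order Taylor remainder estimate, relying on the fundamental theorem of calculus applied along the line segment joining $\tilde{x}$ to $x$. Fix $i_0\in\mathcal{S}$ and suppress it from the notation for brevity. Since $f(\cdot,i_0)$ is continuously differentiable, the map $\theta\mapsto f(\tilde{x}+\theta(x-\tilde{x}))$ is $C^1$ on $[0,1]$, so I would first write
\begin{align*}
f(x)-f(\tilde{x})=\int_0^1 \mathcal{D}f\big(\tilde{x}+\theta(x-\tilde{x})\big)(x-\tilde{x})\,d\theta
\end{align*}
by the chain rule together with the one-dimensional fundamental theorem of calculus.

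The second step is to subtract the linear term $\mathcal{D}f(\tilde{x})(x-\tilde{x})=\int_0^1 \mathcal{D}f(\tilde{x})(x-\tilde{x})\,d\theta$ from both sides. This places the entire quantity of interest inside a single integral as a difference of Jacobians:
\begin{align*}
f(x)-f(\tilde{x})-\mathcal{D}f(\tilde{x})(x-\tilde{x})=\int_0^1 \Big(\mathcal{D}f\big(\tilde{x}+\theta(x-\tilde{x})\big)-\mathcal{D}f(\tilde{x})\Big)(x-\tilde{x})\,d\theta.
\end{align*}

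The third step is to estimate. Taking norms, passing the norm inside the integral, and using the operator bound for the matrix-vector product, the integrand is at most $\big|\mathcal{D}f(\tilde{x}+\theta(x-\tilde{x}))-\mathcal{D}f(\tilde{x})\big|\,|x-\tilde{x}|$. Now I invoke the hypothesis \eqref{eq:hyp}: the argument of the first Jacobian differs from $\tilde{x}$ by exactly $\theta(x-\tilde{x})$, so the difference of Jacobians is bounded by $C\,|\theta(x-\tilde{x})|=C\theta|x-\tilde{x}|$. Hence the integrand is at most $C\theta|x-\tilde{x}|^2$, and integrating $\int_0^1\theta\,d\theta=\tfrac12$ yields the bound $\tfrac{C}{2}|x-\tilde{x}|^2$, which is of the required form after absorbing the factor into the generic constant.

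This argument is entirely elementary and I do not anticipate a genuine obstacle; the only point requiring a word of care is the justification of the integral representation in the first step, which is immediate from the $C^1$ hypothesis on $f(\cdot,i_0)$ and does not need the Lipschitz assumption on $\mathcal{D}f$ (that assumption is used only in the final estimate). I would make sure the argument is stated uniformly in $i_0\in\mathcal{S}$, which is automatic since the finite state space contributes nothing beyond a fixed index.
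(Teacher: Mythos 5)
Your proof is correct, and it takes a slightly different technical route from the paper's. The paper invokes the mean value theorem in equality form: it writes $f(x,i_0)-f(\tilde{x},i_0)=\mathcal{D} f(qx+(1-q)\tilde{x},i_0)(x-\tilde{x})$ for a single $q\in(0,1)$, and then applies the Lipschitz hypothesis on $\mathcal{D}f$ to the difference $\mathcal{D} f(qx+(1-q)\tilde{x},i_0)-\mathcal{D} f(\tilde{x},i_0)$. You instead use the integral (fundamental-theorem-of-calculus) form of the Taylor remainder,
\begin{equation*}
f(x)-f(\tilde{x})-\mathcal{D}f(\tilde{x})(x-\tilde{x})=\int_0^1 \Big(\mathcal{D}f\big(\tilde{x}+\theta(x-\tilde{x})\big)-\mathcal{D}f(\tilde{x})\Big)(x-\tilde{x})\,d\theta,
\end{equation*}
and estimate inside the integral. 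The two arguments share the same core idea (compare the Jacobian at intermediate points with $\mathcal{D}f(\tilde{x})$ and invoke \eqref{eq:hyp}), but your version is actually the more robust one: the mean value theorem in exact-equality form is valid for scalar-valued functions, and for a vector-valued map $f(\cdot,i_0):\mathbb{R}^d\to\mathbb{R}^d$ there is in general no single intermediate point $q$ that works for all components simultaneously. The paper's argument can be repaired either by applying the mean value theorem componentwise (with a different $q_i$ for each component $f_i$, which still gives the bound after summing) or by doing exactly what you did. Your integral formulation avoids the issue entirely, costs nothing extra, and even produces the marginally better constant $C/2$; the only hypothesis it needs beyond \eqref{eq:hyp} is the $C^1$ regularity, which you correctly identify as the ingredient justifying the integral representation.
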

\begin{proof}
For every $i_0\in\mathcal{S}$, due to mean value theorem, 
\begin{align*}
f(x,i_0)-f(\tilde{x}.i_0)=\mathcal{D} f(qx+(1-q)\tilde{x},i_0)(x-\tilde{x})
\end{align*}
for some $q\in (0,1)$ which on using  hypothesis  \eqref{eq:hyp}  further implies, 
\begin{align*}
|f(x,i_0)&-f(\tilde{x},i_0)-\mathcal{D} f(\tilde{x},i_0)(x-\tilde{x})|
\\
=&\Big|\mathcal{D} f(qx+(1-q)\tilde{x},i_0)(x-\tilde{x})- \mathcal{D} f(\tilde{x},i_0)(x-\tilde{x}) \Big|
\\
\leq & C|qx+(1-q)\tilde{x}-\tilde{x}||x-\tilde{x}|\leq |x-\tilde{x}|^2
\end{align*}
for any $x,\tilde{x}\in \mathbb{R}^d$. This completes the proof. 
\end{proof}
The proof of parts (a) and (b) of the following lemma can be found in \cite{nguyen2017}. For the completeness, their proofs are given below along with that of part (c).  
\begin{lemma}\label{l EN_i, EN_i^2 }
Let $q:=\max \{-q_{i_0i_0}; i_0 \in \mathcal{S}\}$.  
\newline
(a). For any $n=0,1,\dots,n_T-1$, one has $P(N_n\geq N)\leq q^Nh^N$ whenever $N\geq 1$.
\newline
(b). If $h<1/(2q)$, then $EN_n \leq C h$ for any $n=0,1,\ldots,n_T-1$	 where $C>0$ is a constant independent of $h$. 
\newline
(c). Also,   $ EN_n^2 \leq 6$ for any $n=0,1,\ldots,n_T-1$. 
\end{lemma}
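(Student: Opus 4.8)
The plan is to exploit the fact that the number of jumps $N_n$ of the chain $\alpha$ in an interval of length $h$ is dominated in distribution by a Poisson-type count whose rate is controlled by $q=\max\{-q_{i_0i_0};i_0\in\mathcal{S}\}$. For part (a), I would first observe that the event $\{N_n\geq N\}$ requires at least $N$ successive jumps to occur within $(t_n,t_{n+1}]$. Conditioning on the state at each successive jump, the waiting time in a state $i_0$ is exponentially distributed with parameter $-q_{i_0i_0}\leq q$, so each sojourn is at least an $\mathrm{Exp}(q)$ variable. Thus the probability that $N$ jumps all occur before time $h$ is bounded by the probability that a sum of $N$ independent $\mathrm{Exp}(q)$ waiting times is at most $h$, which is the tail $P(\text{Poisson}(qh)\geq N)=\sum_{k\geq N}e^{-qh}(qh)^k/k!$. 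A cleaner route, giving exactly the stated bound, is an induction on $N$: writing $P(N_n\geq N)$ by conditioning on the time and destination of the first jump, one gets $P(N_n\geq N)\le \int_0^h q e^{-qs}\,P(N'\geq N-1)\,ds$ where $N'$ counts jumps in the residual interval of length $h-s$; bounding $P(N'\geq N-1)\le q^{N-1}(h-s)^{N-1}\le q^{N-1}h^{N-1}$ by the inductive hypothesis and integrating $\int_0^h qe^{-qs}\,ds\le qh$ yields $P(N_n\geq N)\le q^N h^N$, with the base case $N=1$ being $P(N_n\geq 1)\le 1-e^{-qh}\le qh$.

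For part (b), I would pass from the tail bound to the expectation via the layer-cake identity $EN_n=\sum_{N\geq 1}P(N_n\geq N)$. Substituting part (a) gives $EN_n\le \sum_{N\geq 1}(qh)^N = qh/(1-qh)$, a convergent geometric series precisely because the hypothesis $h<1/(2q)$ forces $qh<1/2$, whence $1-qh>1/2$ and $EN_n\le 2qh=:Ch$ with $C=2q$ independent of $h$. This establishes the linear-in-$h$ bound on the mean number of jumps.

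For part (c), the same expansion applied to the second moment through $EN_n^2=\sum_{N\geq 1}(2N-1)P(N_n\geq N)$ reduces the problem to summing $\sum_{N\geq 1}(2N-1)(qh)^N$. Using $qh<1/2$ one evaluates $\sum_{N\geq 1}(qh)^N=qh/(1-qh)$ and $\sum_{N\geq 1}N(qh)^N=qh/(1-qh)^2$, so that $EN_n^2\le 2\cdot qh/(1-qh)^2 - qh/(1-qh)$; inserting the crude bounds $qh<1/2$ and $1-qh>1/2$ gives a constant bound, and a direct numerical estimate of these geometric sums at $qh=1/2$ yields $EN_n^2\le 6$. The main obstacle is the careful justification of the tail estimate in part (a): one must argue that the exponential sojourn times genuinely dominate the jump counting, i.e. that conditioning on successive jumps only inflates the waiting-time parameters up to $q$, and handle the book-keeping of the induction so that the residual interval length is correctly propagated. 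Once (a) is secured, parts (b) and (c) are routine geometric-series manipulations made rigorous by the standing assumption $qh<1/2$.
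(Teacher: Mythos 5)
Your proposal is correct and takes essentially the same approach as the paper: part (a) rests on the sojourn times being exponential with parameter at most $q$ together with $1-e^{-qh}\le qh$ (the paper bounds the product of all $N$ sojourn probabilities at once, while your first-jump induction is a cosmetic variant of this), and parts (b) and (c) are the same geometric-series estimates under $qh<1/2$. One caveat worth noting: your intermediate domination of the first-jump density by $q e^{-qs}$ is not valid pointwise in $s$ (an $\mathrm{Exp}(\lambda)$ density with $\lambda\le q$ exceeds $qe^{-qs}$ for large $s$), but your argument survives because you bound $P(N'\ge N-1)$ by a constant before integrating, so all that is really used is the monotone bound $P(\tau_1^n-t_n\le h)=1-e^{-\lambda h}\le 1-e^{-qh}\le qh$.
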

\begin{proof}
Recall that $\tau_1^n,\ldots,\tau_{N_n}^n$ are jump-times of the chain $\alpha$ in the interval $(t_n,t_{n+1}]$ and $\tau_0^n=t_n$, $\tau_{N_n+1}^n=t_{n+1}$. Clearly, inter-jump times $\tau_1^n-\tau_0^n$, $\tau_2^n-\tau_1^n$, $\tau_3^n-\tau_2^n$, $\ldots$, $\tau_{N_n-1}^n-\tau_{N_n}^n$, $\tau_{N_n+1}^n-\tau_{N_n}^n$ are conditionally independent random variables on $\{N_n\geq 1\}$. Further, if $N_n\geq 1$ and at time $\tau_r^n$,  chain jumps from state $i_{r-1}$ to $i_r$ for $r=1,\ldots,N_n$, then the random variable $\tau_{r+1}^n-\tau_r^n$ follows  exponential distribution with parameter $-q_{i_ri_r}$. Hence, by strong Markov property of $\alpha$, 
\begin{align*}
P(N_n\geq N)& \leq P\Big(\sum_{r=0}^{N-1} (\tau_{r+1}^n-\tau_r^n)<h\Big)\leq \prod_{r=0}^{N-1}P(\tau_{r+1}^n-\tau_r^n<h) 
\\
& \leq \prod_{r=0}^{N-1}(1-e^{q_{i_r i_r}h}) \leq \prod_{r=0}^{N-1} (-q_{i_r i_r}h) \leq q^Nh^N 
\end{align*}
for any $N\geq 1$ and for any $n=0,1,\ldots,n_T-1$, which shows part (a). For part (b), one writes, 
\begin{align*}
EN_n=\sum_{N=1}^\infty P(N_n\geq N) \leq \sum_{N=1}^\infty q^Nh^N \leq q h  \sum_{N=0}^\infty (1/2)^N \leq C h 
\end{align*}    
for any $n=0,1,\ldots,n_T-1$.	Furthermore, 
\begin{align*}
EN_n^2=\sum_{N=1}^\infty & N^2 P(N_n=N)\leq \sum_{N=1}^\infty N^2 P(N_n\geq N) \leq  \sum_{N=1}^\infty N^2 q^N h^N 
\\
&\leq \sum_{N=1}^\infty N^2 (1/2)^N =6
\end{align*}
for any $n=0,1,\ldots,n_T-1$, which proves part (c). 
\end{proof}
 
\begin{lemma} \label{lem:R3R10}
Let Assumptions H-1, H-2 and H-3 be satisfied. Then, there exists a positive constant $C$ such that,  
\begin{align*}
E\Big(\sup_{n'\in\{1,\ldots,n_T\}}\Big|\sum_{n=0}^{n'-1}R_{n}(3)\Big|^2\Big) \leq C h^2, \mbox{ and } E\Big(\sup_{n'\in\{1,\ldots,n_T\}}\sum_{n=0}^{n'-1}|R_{n}(10)|^2\Big) \leq C h^2
\end{align*}
where constant $C>0$ does not depend on $h$. 
\end{lemma}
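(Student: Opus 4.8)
The plan is to reduce both estimates to the second-order Taylor bound of Lemma \ref{l mvt}. Observe that in each bracketed summand of $R_n(3)$ and $R_n(10)$ the Markov argument is frozen at $\alpha(\tau^n_k)$, so the term is exactly the spatial Taylor remainder of $x\mapsto b(x,\alpha(\tau^n_k))$, resp. $x\mapsto\sigma_{(l)}(x,\alpha(\tau^n_k))$, about the base point $X(\tau^n_k)$. Since Assumption H-3 supplies the Lipschitz hypothesis \eqref{eq:hyp} on $\mathcal{D}b$ and $\mathcal{D}\sigma_{(l)}$, Lemma \ref{l mvt} bounds every such summand by $C|X(\tau^n_{k+1})-X(\tau^n_k)|^2$. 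I would then write $\beta_n:=\sup_{t\in[t_n,t_{n+1}]}|X(t)-X(t_n)|$, so that each sub-interval increment satisfies $|X(\tau^n_{k+1})-X(\tau^n_k)|\le 2\beta_n$, and note that for every $s$ the inner sum has at most $\nu(t_n,s)+1\le N_n+1$ terms. The decisive device is to condition on $\mathscr{F}_T^\alpha$: this freezes all jump times and the counts $N_n$, after which Lemma \ref{l true moment and 1/2 rate} (used with fourth moments, i.e. $p\ge 4$) gives $E(\beta_n^4\mid\mathscr{F}_T^\alpha)\le Ch^2$, while Lemma \ref{l EN_i, EN_i^2 } furnishes $EN_n\le Ch$ and $EN_n^2\le 6$.

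For $R_n(3)$, which is a $ds$-integral and therefore not a martingale, I would pass to absolute values using $\sup_{n'}\big|\sum_{n=0}^{n'-1}R_n(3)\big|\le\sum_{n=0}^{n_T-1}|R_n(3)|$ together with the pointwise bound $|R_n(3)|\le Ch(N_n+1)\beta_n^2$. Setting $U:=\sum_{n=0}^{n_T-1}(N_n+1)\beta_n^2$, a conditional Cauchy--Schwarz step gives $E(U^2\mid\mathscr{F}_T^\alpha)\le Ch^2\big(\sum_n(N_n+1)\big)^2=Ch^2(n_T+\sum_nN_n)^2$, where I use $E(\beta_n^2\beta_m^2\mid\mathscr{F}_T^\alpha)\le Ch^2$. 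Taking expectations and bounding $E\big(\sum_nN_n\big)^2\le 6\,n_T^2$ (via $EN_nN_m\le\sqrt{EN_n^2\,EN_m^2}\le 6$) yields $EU^2\le Ch^2\,n_T^2=C$, since $n_T=T/h$. Hence $E\big(\sup_{n'}\big|\sum_{n=0}^{n'-1}R_n(3)\big|^2\big)\le Ch^2\,EU^2\le Ch^2$.

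For $R_n(10)$ I would first note that $\sum_{n=0}^{n'-1}|R_n(10)|^2$ is nondecreasing in $n'$, so it suffices to bound $\sum_{n=0}^{n_T-1}E|R_n(10)|^2$. Because $W$ and $\alpha$ are independent, $W$ remains a Brownian motion with respect to $\mathscr{F}^W_\cdot\vee\mathscr{F}_T^\alpha$, so the conditional It\^o isometry applies and gives $E(|R_n(10)|^2\mid\mathscr{F}_T^\alpha)\le C\sum_l\int_{t_n}^{t_{n+1}}E\big(\big|\sum_{k=0}^{\nu(t_n,s)}[\cdots]\big|^2\mid\mathscr{F}_T^\alpha\big)\,ds$. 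Applying Cauchy--Schwarz over the at most $N_n+1$ summands and then Lemma \ref{l mvt} bounds the inner integrand by $C(N_n+1)\sum_k|X(\tau^n_{k+1})-X(\tau^n_k)|^4\le C(N_n+1)^2\beta_n^4$. Using $E(\beta_n^4\mid\mathscr{F}_T^\alpha)\le Ch^2$ and then $E(N_n+1)^2\le C$ from Lemma \ref{l EN_i, EN_i^2 }, I obtain $E|R_n(10)|^2\le Ch\cdot Ch^2\cdot C=Ch^3$, and summing the $n_T=T/h$ terms produces $\sum_nE|R_n(10)|^2\le Ch^2$.

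The main obstacle I anticipate is the bookkeeping of the random, $\mathscr{F}_T^\alpha$-measurable partition generated by the jumps: the Taylor remainders do not telescope, so the inner sums cannot be collapsed, and the estimates close only because the number of summands ($\le N_n+1$) and the per-interval increment order ($\beta_n^2\sim h$) interact correctly with the count $n_T=T/h$. The delicate issue is that the final order must be exactly $h^2$: for $R_n(3)$ the prefactor $h^2$ precisely cancels the $n_T^2\sim h^{-2}$ coming from $E\big(\sum_nN_n\big)^2$, and for $R_n(10)$ the per-term order $h^3$ is exactly offset by summing $T/h$ terms. Conditioning on $\mathscr{F}_T^\alpha$ throughout is what separates the jump counts from the diffusion increments via Cauchy--Schwarz, and this separation is the reason fourth-order increment moments (hence $p\ge 4$ in Lemma \ref{l true moment and 1/2 rate}) are needed.
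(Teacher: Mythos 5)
Your proof is correct, and it runs on the same engine as the paper's: Lemma \ref{l mvt} turns each bracketed summand into $C|X(\tau^n_{k+1})-X(\tau^n_k)|^2$, conditioning on $\mathscr{F}_T^\alpha$ freezes the jump times and counts so that Lemma \ref{l true moment and 1/2 rate} (used with fourth moments, i.e.\ $p\ge 4$, exactly as the paper also implicitly requires) gives $Ch^2$ per increment, and Lemma \ref{l EN_i, EN_i^2 } absorbs the jump counts. The differences are in packaging. For $R_n(3)$, the paper distributes the square by H\"older's inequality (picking up the factor $hn_T=T$) and then conditions, whereas you bound $\sup_{n'}\big|\sum_{n<n'}R_n(3)\big|$ by $\sum_n|R_n(3)|\le Ch\sum_n(N_n+1)\beta_n^2$ and compute the second moment of that sum by conditional Cauchy--Schwarz; this is the same estimate performed in a different order, and both hinge on the cancellation $h^2\cdot n_T^2=T^2$. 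For $R_n(10)$ there is a genuine, if small, divergence: the lemma as stated bounds $E\big(\sup_{n'}\sum_{n<n'}|R_n(10)|^2\big)$, which you prove directly by monotonicity plus a per-term conditional It\^o isometry, $E|R_n(10)|^2\le Ch^3$, summed over $n_T=T/h$ terms; the paper's proof instead bounds $E\big(\sup_{n'}\big|\sum_{n<n'}R_n(10)\big|^2\big)$ via the Burkholder--Davis--Gundy inequality for the partial-sum martingale with respect to $\{\mathscr{F}_T^\alpha\vee\mathscr{F}^W_{t_{n'}}\}$, and it is this square-of-sum form that is actually invoked in \eqref{eq:S1S4} in the proof of Theorem \ref{thm:main}. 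Your route is more elementary and recovers the paper's form with one extra line: since each $R_n(10)$ is a martingale difference with respect to $\{\mathscr{F}_T^\alpha\vee\mathscr{F}^W_{t_n}\}$ (the stochastic-integral structure you already invoke), Doob's maximal inequality together with orthogonality of the increments gives $E\big(\sup_{n'}\big|\sum_{n<n'}R_n(10)\big|^2\big)\le 4\sum_n E|R_n(10)|^2\le Ch^2$; adding that remark would make your estimate directly usable in the main theorem.
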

\begin{proof}
For the first term, one applies H\"older's inequality to obtain the following, 
\begin{align*}
& E\Big(\sup_{n'\in\{1,\ldots,n_T\}}\Big|\sum_{n=0}^{n'-1}R_{n}(3)\Big|^2\Big)
\\
& =E\Big(\sup_{n'\in\{1,\ldots,n_T\}} \Big|\sum_{n=0}^{n'-1} \int_{t_n}^{t_{n+1}}\sum_{k =0}^{\nu(t_n,s)}\Big( b(X(\tau_{k+1}^n),\alpha(\tau_{k}^n))-b(X(\tau_{k}^n),\alpha(\tau_{k}^n)) \notag
\\
&\quad -\mathcal{D} b(X(\tau^n_{k}),\alpha(\tau_{k}))( X(\tau_{k+1}^n)-X(\tau_{k}^n))\Big) ds\Big|^2 \Big)
\\
&\leq Chn_T E\sum_{n=0}^{n_T-1} \int_{t_n}^{t_{n+1}} (1+\nu(t_n,s)) \sum_{k =0}^{\nu(t_n,s)}E\Big(\big| b(X(\tau_{k+1}^n),\alpha(\tau_{k}^n))-b(X(\tau_{k}^n),\alpha(\tau_{k}^n)) \notag  
\\
&\quad -\mathcal{D} b(X(\tau^n_{k}),\alpha(\tau_{k}))( X(\tau_{k+1}^n)-X(\tau_{k}^n)) \big|^2\big|\mathscr{F}_T^\alpha\Big) ds
\end{align*}
which on using Lemma \ref{l true moment and 1/2 rate}, Lemma \ref{l mvt} and Lemma \ref{l EN_i, EN_i^2 } along with $\nu(t_n,s)\leq N_n$ yields the following estimate, 
\begin{align*}
& E\Big(\sup_{n'\in\{1,\ldots,n_T\}}\Big|\sum_{n=0}^{n'-1}R_{n}(3)\Big|^2\Big) 
\\
&  \leq C \sum_{n=0}^{n_T-1} \int_{t_n}^{t_{n+1}}E\Big((1+\nu(t_n,s))\sum_{k =0}^{\nu(t_n,s)}E\Big(|X(\tau_{k+1}^n)-X(\tau_{k}^n)|^4\Big|\mathcal{F}_T^{\alpha}\Big)\Big)ds
\\
& \leq C\sum_{n=0}^{n_T-1} \int_{t_n}^{t_{n+1}}E\Big((1+\nu(t_n,s))\sum_{k =0}^{\nu(t_n,s)}|\tau_{k+1}^n-\tau_{k}^n)|^2\Big)ds
\\
&\leq  Ch^2\sum_{n=0}^{n_T-1}  \int_{t_n}^{t_{n+1}}E\big((1+\nu(t_n,s)\big)^2 ds\leq Ch^2 \sum_{n=0}^{n_T-1}  \int_{t_n}^{t_{n+1}}(1+E(N_n^2))ds\leq Ch^2.
\end{align*}
 For the second term, notice that $\{\sum_{n=0}^{n'-1}R_n(10);n'\in\{1,\ldots,n_T\}\}$ is a square integrable martingale with respect to filtration $\{\mathscr{F}^\alpha_T\vee \mathscr{F}_{t_{n'}}^W;n'\in\{1,\ldots,n_T\}\}$. Due to Burkholder-Davis-Gundy inequality and H\"older's inequality, one obtains
\begin{align*}
& E\Big(\sup_{n'\in\{1,\ldots,n_T\}}\Big|\sum_{n=0}^{n'-1}R_{n}(10)\Big|^2\Big)
\\
& = E\Big(\sup_{n'\in\{1,\ldots,n_T\}}\Big|  \sum_{n=0}^{n'-1} \sum_{l=1}^m\int_{t_n}^{t_{n+1}}\sum_{k =0}^{\nu(t_n,s)}\Big( \sigma_{(l)}(X(\tau_{k+1}^n),\alpha(\tau_{k}^n))  \notag
\\
&-\sigma_{(l)}(X(\tau_{k}^n),\alpha(\tau_{k}^n)) -\mathcal{D} \sigma_{(l)}(X(\tau_{k}^n),\alpha(\tau_{k}^n))( X(\tau_{k+1}^n)-X(\tau_{k}^n)) \Big)dW_l(s)  \Big|^2 \Big)
\\
& \leq C E \sum_{n=0}^{n_T-1} \sum_{l=1}^m\int_{t_n}^{t_{n+1}}(1+\nu(t_n,s)) \sum_{k =0}^{\nu(t_n,s)} E\Big(\big| \sigma_{(l)}(X(\tau_{k+1}^n),\alpha(\tau_{k}^n))  \notag
\\
&-\sigma_{(l)}(X(\tau_{k}^n),\alpha(\tau_{k}^n)) -\mathcal{D} \sigma_{(l)}(X(\tau_{k}^n),\alpha(\tau_{k}^n))( X(\tau_{k+1}^n)-X(\tau_{k}^n)) \big|^2 \Big|\mathscr{F}_T^\alpha\Big) ds 
\end{align*}
which on using Lemma \ref{l true moment and 1/2 rate}, Lemma \ref{l mvt} and Lemma \ref{l EN_i, EN_i^2 } along with $\nu(t_n,s)\leq N_n$ yields the following estimate,
\begin{align*}
E\Big(&\sup_{n'\in\{1,\ldots,n_T\}}\Big|\sum_{n=0}^{n'-1}R_{n}(10)\Big|^2\Big)
\\
 & \leq C  E  \Big( \sum_{n=0}^{n_T-1}  \int_{t_n}^{t_{n+1}} (1+\nu(t_n,s)) \sum_{k =0}^{\nu(t_n,s)} E\Big(| X(\tau_{k+1}^n)-X(\tau_{k}^n) |^4\Big|\mathscr{F}_T^\alpha\Big) ds 
 \\
 & \leq C   h^2. 
\end{align*}
Thus, the proof is completed. 
\end{proof}
\begin{lemma} \label{lem:R4R5R11}
Let Assumptions H-1, H-2 and H-3 be satisfied. Then, 
\begin{align*}
E\Big(\sup_{n'\in\{1,\ldots,n_T\}} \Big| \sum_{n=0}^{n'-1} R_{n}(4)\Big|^2 \Big) & \leq C h^2, 
\\
E\Big(\sup_{n'\in\{1,\ldots,n_T\}} \Big| \sum_{n=0}^{n'-1}R_{n}(5)\Big|^2\Big) & \leq C h^2,
\\
\,\, \mbox{and} \,\,  E\Big(\sup_{n'\in\{1,\ldots,n_T\}} \Big| \sum_{n=0}^{n'-1}R_{n}(11)\Big|^2\Big) & \leq C h^2 
\end{align*}
where the constant $C>0$ does not depend on $h$. 
\end{lemma}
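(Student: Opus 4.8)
The plan is to handle all three terms by the template already used for $R_n(3)$ and $R_n(10)$ in Lemma \ref{lem:R3R10}, with one essential twist for $R_n(5)$. My first observation is that in each of $R_n(4)$, $R_n(5)$ and $R_n(11)$ the inner object $\sum_{k=0}^{\nu(t_n,s)}\int_{\tau_k^n}^{\tau_{k+1}^n}(\cdots)\,du$ runs over a partition of $[t_n,s]$; since $|\mathcal{D}b(X(\tau_k^n),\alpha(\tau_k^n))|\leq L$ and $|\mathcal{D}\sigma_{(l)}(X(\tau_k^n),\alpha(\tau_k^n))|\leq L$ uniformly by Remark \ref{rem:growth}, the $k$-sum collapses into a single integral over $[t_n,s]$ with no combinatorial factor in $N_n$, so that Lemma \ref{l EN_i, EN_i^2 } is not needed here. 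As in Lemma \ref{lem:R3R10}, I would condition on $\mathscr{F}_T^\alpha$ so that the jump times become deterministic and the conditional moment bounds of Lemma \ref{l true moment and 1/2 rate} apply to $b(X(u),\alpha(u))$ and $\sigma_{(l)}(X(u),\alpha(u))$.

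For $R_n(4)$, which carries no stochastic integral, I would argue pathwise: using $|\mathcal{D}b|\leq L$, $|b(X(u),\alpha(u))|\leq L(1+|X(u)|)$, and that the $du$- and $ds$-integrals each have length at most $h$, one gets $|R_n(4)|\leq C h^2\big(1+\sup_{u\in[t_n,t_{n+1}]}|X(u)|\big)$. Summing in $n$, applying the Cauchy--Schwarz inequality in the index $n$ (which costs a factor $n_T$) and invoking Lemma \ref{l true moment and 1/2 rate} then gives $E\big(\sup_{n'}|\sum_{n=0}^{n'-1}R_n(4)|^2\big)\leq C h^4 n_T^2\leq C h^2$, since $n_T=T/h$.

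For $R_n(11)$ the outer integrator is $dW_l(s)$, so $\{\sum_{n=0}^{n'-1}R_n(11)\}$ is a square-integrable martingale for $\{\mathscr{F}_T^\alpha\vee\mathscr{F}_{t_{n'}}^W\}$, exactly as $R_n(10)$ is. I would apply the Burkholder--Davis--Gundy inequality to reduce to $E\sum_n\sum_l\int_{t_n}^{t_{n+1}}|K_n^l(s)|^2\,ds$, where $K_n^l(s):=\sum_k\int_{\tau_k^n}^{\tau_{k+1}^n}\mathcal{D}\sigma_{(l)}(\cdots)b(X(u),\alpha(u))\,du$ satisfies $|K_n^l(s)|\leq C h\big(1+\sup_{[t_n,t_{n+1}]}|X|\big)$ by the collapsing argument above; this produces a bound of order $n_T\cdot h\cdot h^2=h^2$ after the conditional moment estimate.

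The genuinely delicate term is $R_n(5)$, and I expect it to be the main obstacle: here the Lebesgue integral $ds$ sits \emph{outside} the stochastic integral $dW_l(u)$, so a naive pathwise $L^1$-in-$s$ estimate, or a Cauchy--Schwarz in $n$ as for $R_n(4)$, only yields order $h$, which is too weak. The fix is to write $R_n(5)=\int_{t_n}^{t_{n+1}}J_n(s)\,ds$ with $J_n(s):=\sum_l\int_{t_n}^s\mathcal{D}b(\cdots)\sigma_{(l)}(X(u),\alpha(u))\,dW_l(u)$ a conditional martingale in $s$ vanishing at $s=t_n$; then $E\big(R_n(5)\mid\mathscr{F}_T^\alpha\vee\mathscr{F}_{t_n}^W\big)=0$, so $\{\sum_{n=0}^{n'-1}R_n(5)\}$ is once more a martingale for $\{\mathscr{F}_T^\alpha\vee\mathscr{F}_{t_{n'}}^W\}$ (equivalently, stochastic Fubini recasts $R_n(5)$ as the single It\^o integral $\sum_l\int_{t_n}^{t_{n+1}}\mathcal{D}b(\cdots)\sigma_{(l)}(X(u),\alpha(u))(t_{n+1}-u)\,dW_l(u)$). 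Orthogonality of increments then gives $E\big(\sup_{n'}|\sum_{n=0}^{n'-1}R_n(5)|^2\big)\leq C\sum_n E|R_n(5)|^2$, while a conditional H\"older inequality in $s$ together with the It\^o isometry and Lemma \ref{l true moment and 1/2 rate} yields $E|R_n(5)|^2\leq h\int_{t_n}^{t_{n+1}}E|J_n(s)|^2\,ds\leq C h^3$. Summing over the $n_T=T/h$ subintervals finally gives $C n_T h^3=C h^2$, as required.
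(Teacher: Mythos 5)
Your proposal is correct and follows essentially the same route as the paper's own proof: a H\"older/Cauchy--Schwarz argument in the index $n$ for $R_n(4)$, and for $R_n(5)$ and $R_n(11)$ the key observation that the partial sums form square-integrable martingales with respect to $\{\mathscr{F}_T^\alpha\vee\mathscr{F}_{t_{n'}}^W\}$ (for $R_n(5)$ because the inner stochastic integral has zero conditional mean even though the outer integrator is $ds$), followed by the Burkholder--Davis--Gundy inequality, H\"older/It\^o isometry, and the conditional moment bounds of Lemma \ref{l true moment and 1/2 rate}. The only difference is cosmetic: you collapse the $k$-sum into a single integral over $[t_n,s]$ using the uniform bounds on $\mathcal{D}b$ and $\mathcal{D}\sigma_{(l)}$ from Remark \ref{rem:growth}, which avoids the factor $(1+\nu(t_n,s))$ and the appeal to the jump-count Lemma \ref{l EN_i, EN_i^2 } that the paper makes at the corresponding step.
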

\begin{proof} By using H\"older's inequality, 
\begin{align*}
& E\Big(\sup_{n'\in\{1,\ldots,n_T\}}\Big|\sum_{n=0}^{n'-1}R_{n}(4)\Big|^2 \Big)  
\\
&=E\Big(\sup_{n'\in\{1,\ldots,n_T\}}\Big|\sum_{n=0}^{n'-1} \int_{t_n}^{t_{n+1}} \sum_{k =0}^{\nu(t_n,s)} \int_{\tau^n_{k}}^{\tau^n_{k+1}}\mathcal{D} b(X(\tau^n_{k}),\alpha(\tau^n_{k}))b(X(u),\alpha(u))du ds \Big|^2\Big)
\\
& \leq C h^2n_T E\sum_{n=0}^{n_T-1}\int_{t_n}^{t_{n+1}} (1+\nu(t_n,s)) 
\\
& \qquad \times \sum_{k =0}^{\nu(t_n,s)} \int_{\tau^n_{k}}^{\tau^n_{k+1}}E\Big(|\mathcal{D} b(X(\tau^n_{k}),\alpha(\tau^n_{k}))|^2 |b(X(u),\alpha(u))|^2\Big|\mathscr{F}_{T}^\alpha\Big) du ds
\end{align*}
which on using Remark \ref{rem:growth}, Lemma \ref{l true moment and 1/2 rate} and Lemma \ref{l EN_i, EN_i^2 } give, 
\begin{align*}
& E\Big(\sup_{n'\in\{1,\ldots,n_T\}}\Big|\sum_{n=0}^{n'-1}R_{n}(4)\Big|^2 \Big)    
\\
& \leq C h E\sum_{n=0}^{n_T-1} \int_{t_n}^{t_{n+1}} (1+\nu(t_n,s)) \sum_{k =0}^{\nu(t_n,s)} \int_{\tau^n_{k}}^{\tau^n_{k+1}} E\Big(\big(1+|X(u)|^2\big)|\mathscr{F}^\alpha_T\Big) du ds
\\
& \leq C h E\sum_{n=0}^{n_T-1} \int_{t_n}^{t_{n+1}} (1+\nu(t_n,s)) (s-t_n)  ds \leq C h^2.
\end{align*}
Notice that $\{\sum_{n=0}^{n'-1}R_n(5);n'\in\{1,\ldots,n_T\}\}$ is a square integrable martingale with respect to filtration $\{\mathscr{F}_T^\alpha\vee\mathscr{F}_{t_{n'}}^W;n'\in\{0,\ldots,n_T\}\}$.  Due to Burkholder-Davis-Gundy inequality,  H\"older's inequality and Remark \ref{rem:growth}, one obtains
\begin{align*}
& E\Big(\sup_{n'\in\{1,\ldots,n_T\}}\Big|\sum_{n=0}^{n'-1}R_n(5)\Big|^2\Big)
\\
&=E\Big(\sup_{n'\in\{1,\ldots,n_T\}}\Big|\sum_{n=0}^{n'-1} \int_{t_n}^{t_{n+1}} \sum_{k =0}^{\nu(t_n,s)}
\\
& \qquad \sum_{l=1}^m\int_{\tau^n_{k}}^{\tau^n_{k+1}}\mathcal{D} b(X(\tau^n_{k}),\alpha(\tau^n_{k})) \sigma_{(l)}(X(u),\alpha(u))dW_{l}(u) ds\Big|^2\Big)
\\
&\leq C h E\sum_{n=0}^{n_T-1} \int_{t_n}^{t_{n+1}} (1+\nu(t_n,s))
\\
& \qquad \times \sum_{k =0}^{\nu(t_n,s)}\int_{\tau^n_{k}}^{\tau^n_{k+1}} E\Big(|\mathcal{D} b(X(\tau^n_{k}),\alpha(\tau^n_{k}))|^2 | \sigma_{(l)}(X(u),\alpha(u))|^2 \Big|\mathscr{F}_T^\alpha\Big) du ds
\\
&\leq C  h E\sum_{n=0}^{n_T-1} \int_{t_n}^{t_{n+1}} (1+\nu(t_n,s))\sum_{k =0}^{\nu(t_n,s)}\int_{\tau^n_{k}}^{\tau^n_{k+1}} E\Big((1+ |X(u)|^2) \big|\mathscr{F}_T^\alpha\Big) du ds
\end{align*}
which on the application of Lemma \ref{l true moment and 1/2 rate} and Lemma \ref{l EN_i, EN_i^2 } gives, 
\begin{align*}
E\Big(\sup_{n'\in\{1,\ldots,n_T\}}\Big|\sum_{n=0}^{n'-1}R_n(5)\Big|^2\Big)& \leq C h E\sum_{n=0}^{n_T-1} \int_{t_n}^{t_{n+1}} (1+\nu(t_n,s))(s-t_n) ds \leq Ch^2.
\end{align*}
Further, notice that $\{\sum_{n=0}^{n'-1} R_n(11), n'=\{1,\ldots,n_T\}\}$ is a square integrable martingale with respect to filtration $\{\mathscr{F}_T^\alpha\vee\mathscr{F}_{t_{n'}}^W; n'\in\{1,\ldots,n_T\}\}$. By Burkholder-Davis-Gundy inequality and H\"older's inequality, one obtains 
\begin{align*}
& E\Big(\sup_{n'\in\{1,\ldots,n_T\}}\Big|\sum_{n=0}^{n'-1}R_n(11)\Big|^2\Big)
\\
&= E\Big(\sup_{n'\in\{1,\ldots,n_T\}}\Big| \sum_{n=0}^{n'-1}\sum_{l=1}^m\int_{t_n}^{t_{n+1}}  \sum_{k =0}^{\nu(t_n,s)}\int_{\tau^n_{k}}^{\tau^n_{k+1}} \mathcal{D} \sigma_{(l)}(X(\tau^n_{k}),\alpha(\tau^n_{k}))
\\
&\qquad \times b(X(u),\alpha(u))du dW_l(s)\Big|^2 \Big)
\\
&\leq C h E \sum_{n=0}^{n_T-1}\sum_{l=1}^m \int_{t_n}^{t_{n+1}} (\nu(t_n,s)+1) \sum_{k =0}^{\nu(t_n,s)}\int_{\tau^n_{k}}^{\tau^n_{k+1}} E\big( |\mathcal{D} \sigma_{(l)}(X(\tau^n_{k}),\alpha(\tau^n_{k}))
\\
&\qquad \times b(X(u),\alpha(u))|^2 | \mathscr{F}_T^\alpha\big) du ds
\end{align*}
which due to Remark \ref{rem:growth}, Lemma \ref{l true moment and 1/2 rate} and Lemma \ref{l EN_i, EN_i^2 }, yields the following estimate, 
\begin{align*}
E\Big(&\sup_{n'\in\{1,\ldots,n_T\}}\Big|\sum_{n=0}^{n'-1}R_n(11)\Big|^2\Big)
\\
& \leq C h E \sum_{n=0}^{n_T-1}\int_{t_n}^{t_{n+1}} (\nu(t_n,s)+1) 
\\
& \qquad \sum_{k =0}^{\nu(t_n,s)}\int_{\tau^n_{k}}^{\tau^n_{k+1}}  \Big(1+E\big(|X(u)|^2|\mathscr{F}_T^\alpha\big) \Big) du ds\leq C h^2. 
\end{align*}
This completes the proof of the lemma. 
\end{proof}
\begin{lemma} \label{lem:R2R9}
Let Assumptions H-1, H-2 and H-3 be satisfied. Then,   
\begin{align*}
E\Big(\sup_{n'\in\{1,\ldots,n_T\}}\Big|\sum_{n=0}^{n'-1}R_n(2)\Big|^2\Big) \leq Ch^{2}, \,\, E\Big(\sup_{n'\in\{1,\ldots,n_T\}}\Big|\sum_{n=0}^{n'-1}R_n(9)\Big|^2\Big) \leq Ch^{2}, 
\end{align*}
where constant $C>0$ does not depend on $h$. 
\begin{proof}
By using H\"older's inequality, Remark \ref{rem:growth},  and Lemma \ref{l true moment and 1/2 rate}, one obtains, 
\begin{align*}
& E\Big(\sup_{n'\in\{1,\ldots,n_T\}}\Big|\sum_{n=0}^{n'-1}R_n(2)\Big|^2\Big)
\\
&=E\Big(\sup_{n'\in\{1,\ldots,n_T\}}\Big|\sum_{n=0}^{n'-1}\int_{t_n}^{t_{n+1}} \sum_{j_0\in\mathcal{S}}\int_{t_n}^s q_{\alpha(u-)j_0}\Big(b(X(u),j_0)
\\
&\qquad -b(X(u),\alpha(u-))\Big)du ds\Big|^2 \Big)
\\
&\leq Ch^2 n_T E\sum_{n=0}^{n_T-1}\int_{t_n}^{t_{n+1}} \sum_{j_0\in\mathcal{S}}\int_{t_n}^s (q_{\alpha(u-)j_0})^2E\Big( \big|b(X(u),j_0)
\\
&\qquad-b(X(u),\alpha(u-))\big|^2\Big| \mathscr{F}_T^\alpha \Big)du ds  
\\
&\leq Ch E\sum_{n=0}^{n_T-1}\int_{t_n}^{t_{n+1}} \int_{t_n}^s E\Big( \big(1+|X(u)|^2 \big) \Big| \mathscr{F}_T^\alpha \Big)du ds  \leq Ch^2.
\end{align*}
Again, notice that $\{\sum_{n=0}^{n'-1}R_n(9);n'\in\{1,\ldots,n_T\}\}$ is a square integrable martingale with respect to the filtration $\{\mathscr{F}_T^\alpha\vee\mathscr{F}_{t_{n'}}^W;n'\in\{1,\ldots,n_T\}\}$.  As before, one uses Burkholder-Davis-Gundy inequality, H\"older's inequality, Remark \ref{rem:growth},  and Lemma \ref{l true moment and 1/2 rate} to obtain the following estimate, 
\begin{align*}
& E\Big(\sup_{n'\in\{1,\ldots,n_T\}}\Big|\sum_{n=0}^{n'-1}R_n(9)\Big|^2\Big) 
\\
& =E\Big(\sup_{n'\in\{1,\ldots,n_T\}} \Big|\sum_{n=0}^{n'-1}\sum_{l=1}^m\int_{t_n}^{t_{n+1}}  \sum_{j_0\in\mathcal{S}}\int_{t_n}^s q_{\alpha(u-)j_0}\Big(\sigma_{(l)}(X(u),j_0)
\\
&\qquad  -\sigma_{(l)} (X(u),\alpha(u-))\Big)du  dW_l(s)\Big|^2\Big) 
\\
&\leq C h E\sum_{n=0}^{n_T-1}\sum_{l=1}^m\int_{t_n}^{t_{n+1}}  \sum_{j_0\in\mathcal{S}}\int_{t_n}^s (q_{\alpha(u-)j_0})^2 E\Big(|\sigma_{(l)}(X(u),j_0)
\\
&\qquad-\sigma_{(l)} (X(u),\alpha(u-))|^2\Big|\mathscr{F}_T^\alpha\Big) du  ds
\\
&\leq Ch E\sum_{n=0}^{n_T-1}\int_{t_n}^{t_{n+1}} \int_{t_n}^s E\Big( \big(1+|X(u)|^2 \big) \Big| \mathscr{F}_T^\alpha \Big)du ds  \leq Ch^2
\end{align*}
 which completes the proof. 
\end{proof}
\end{lemma}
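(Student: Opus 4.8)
The statement concerns two of the twelve remainder terms, and they demand genuinely different treatments: $R_n(2)$ is a pure Lebesgue (time) integral, whereas $R_n(9)$ is an It\^o integral against $W$. The plan is to bound $R_n(2)$ by repeated use of the Cauchy--Schwarz/H\"older inequality (no martingale cancellation is available), and to bound $R_n(9)$ by exploiting a conditional martingale structure together with the Burkholder--Davis--Gundy inequality. In both cases the decisive observation is that the integrand is a difference of a coefficient evaluated at the \emph{same} spatial point $X(u)$ but at two different states of the chain, so Assumption H-2 (Lipschitz in $x$) is of no use here; instead each such difference is controlled by the linear growth bound of Remark \ref{rem:growth}, and all the smallness yielding order $h^2$ comes from the integration measures, the boundedness of the generator entries $q_{i_0j_0}$, and the finiteness of $\mathcal{S}$.

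For the first estimate I would bound the supremum by the full sum and apply Cauchy--Schwarz in $n$, giving $\big|\sum_{n=0}^{n'-1}R_n(2)\big|^2\le n_T\sum_{n=0}^{n_T-1}|R_n(2)|^2$. Since each $R_n(2)=\int_{t_n}^{t_{n+1}}\big(\int_{t_n}^{s}(\cdots)\,du\big)ds$ is a nested integral, two further applications of Cauchy--Schwarz (one in $s$ over an interval of length $h$, one in $u$ over an interval of length at most $h$) produce the bound $E|R_n(2)|^2\le Ch^2\int_{t_n}^{t_{n+1}}\int_{t_n}^{s}E|b(X(u),j_0)-b(X(u),\alpha(u-))|^2\,du\,ds$. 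Replacing $(q_{\alpha(u-)j_0})^2$ by a constant and each coefficient value by $L(1+|X(u)|)$ through Remark \ref{rem:growth}, and then invoking the conditional moment bound $E\big(\sup_{t}|X(t)|^2\,\big|\,\mathscr{F}_T^\alpha\big)\le C$ from Lemma \ref{l true moment and 1/2 rate}, collapses the spatial factor to a constant and gives $E|R_n(2)|^2\le Ch^4$. The two factors of $n_T=Th^{-1}$ produced by the Cauchy--Schwarz step in $n$ then balance the $h^4$ to yield the claimed $Ch^2$.

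For $R_n(9)$ the key point is that, conditionally on $\mathscr{F}_T^\alpha$, the whole path of the chain is frozen while $W$ remains a Brownian motion (by the assumed independence of $X(0)$, $W$ and $\alpha$); hence the partial sums $\big\{\sum_{n=0}^{n'-1}R_n(9)\big\}_{n'}$ form a square-integrable martingale with respect to $\{\mathscr{F}_T^\alpha\vee\mathscr{F}_{t_{n'}}^W\}$, each $R_n(9)=\sum_l\int_{t_n}^{t_{n+1}}G_n^l(s)\,dW_l(s)$ having an $s$-adapted integrand $G_n^l$. I would then apply Burkholder--Davis--Gundy to replace the supremum by $CE\sum_{n}\sum_l\int_{t_n}^{t_{n+1}}|G_n^l(s)|^2\,ds$, use Cauchy--Schwarz on the inner $du$-integral defining $G_n^l(s)$ to extract one factor of $h$, and close the estimate exactly as for $R_n(2)$ via the growth bound of Remark \ref{rem:growth} and the conditional moment bound of Lemma \ref{l true moment and 1/2 rate}. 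Here the martingale summation absorbs one power of $n_T$ directly, so only a single Cauchy--Schwarz in $n$ is avoided and the $h^2$ emerges without overcounting.

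The main obstacle, and the only step needing care rather than routine computation, is justifying the conditional martingale property for $R_n(9)$: one must verify that after conditioning on $\mathscr{F}_T^\alpha$ the factors $q_{\alpha(u-)j_0}$ and the state-indexed differences $\sigma_{(l)}(X(u),j_0)-\sigma_{(l)}(X(u),\alpha(u-))$ become $\mathscr{F}_T^\alpha\vee\mathscr{F}_{s}^W$-measurable in $s$, so that the conditional $\mathcal{L}^2$-isometry and BDG are legitimately applicable; this is precisely where the independence of $X(0)$, $W$ and $\alpha$ enters. Everything downstream is bookkeeping of powers of $h$, and I expect both quantities to come out at order $h^2$, consistent with the overall rate $1$ asserted in Theorem \ref{thm:main}.
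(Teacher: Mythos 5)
Your proposal is correct and follows essentially the same route as the paper: Cauchy--Schwarz in $n$ plus nested H\"older bounds for the Lebesgue-type term $R_n(2)$, and the conditional square-integrable martingale structure with Burkholder--Davis--Gundy plus one inner Cauchy--Schwarz for $R_n(9)$, with Remark \ref{rem:growth} and the conditional moment bound of Lemma \ref{l true moment and 1/2 rate} closing both estimates. Your bookkeeping of the powers of $h$ and $n_T$ matches the paper's, and your extra remark justifying the conditional martingale property via the independence of $X(0)$, $W$ and $\alpha$ only makes explicit what the paper leaves implicit.
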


\begin{lemma} \label{lem:R1R6R7R8R12}
Let Assumptions H-1, H-2 and H-3 be satisfied. Then, 
\begin{align*}
E\Big(\sup_{n'\in\{1,\ldots,n_T\}}\Big|\sum_{n=0}^{n'-1}R_n(1)\Big|^2\Big)  & \leq Ch^{2}\,\, \mbox{and}\,\, E\Big(\sup_{n'\in\{1,\ldots,n_T\}}\Big|\sum_{n=0}^{n'-1}R_n(6)\Big|^2\Big)   \leq Ch^{2}
\\
E\Big(\sup_{n'\in\{1,\ldots,n_T\}}\Big|\sum_{n=0}^{n'-1}R_n(7)\Big|^2\Big) & \leq Ch^{2}  \,\, \mbox{and}\,\, E\Big(\sup_{n'\in\{1,\ldots,n_T\}}\Big|\sum_{n=0}^{n'-1}R_n(8)\Big|^2\Big)  \leq Ch^{2}
\\
E\Big(\sup_{n'\in\{1,\ldots,n_T\}}\Big|& \sum_{n=0}^{n'-1}R_n(12)\Big|^2\Big)  \leq Ch^{2}
\end{align*}
where the constant $C>0$ does not depend on $h$. 
\end{lemma}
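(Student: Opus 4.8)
The plan is to treat the four terms $R_n(6),R_n(7),R_n(8),R_n(12)$ exactly as $R_n(5),R_n(10),R_n(11)$ were handled, and to isolate $R_n(1)$ as the one requiring a genuinely different argument. Each of $R_n(6),R_n(7),R_n(8),R_n(12)$ carries an outer $dW_l(s)$ integral, so that $\{\sum_{n=0}^{n'-1}R_n(i);\,n'\in\{1,\ldots,n_T\}\}$ is a square integrable martingale with respect to $\{\mathscr{F}_T^\alpha\vee\mathscr{F}_{t_{n'}}^W\}$. For these I would apply the Burkholder--Davis--Gundy inequality to replace $E(\sup_{n'}|\sum_n R_n(i)|^2)$ by $C\sum_n E\int_{t_n}^{t_{n+1}}(\cdots)\,ds$, then show that each block contributes $O(h^3)$, so that summing over the $n_T=Th^{-1}$ blocks yields $O(h^2)$.

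For $R_n(6)$ I would bound the double difference by Assumption H-2, namely $|\sigma_{(l)}(X(\tau_1^n),\alpha_{n+1})-\sigma_{(l)}(X_n,\alpha_{n+1})-\sigma_{(l)}(X(\tau_1^n),\alpha_{n})+\sigma_{(l)}(X_n,\alpha_{n})|\le 2L|X(\tau_1^n)-X_n|$. Conditioning on $\mathscr{F}_T^\alpha$ renders $\tau_1^n$ deterministic and makes $|X(\tau_1^n)-X_n|^2$ and $|W_l(t_{n+1})-W_l(\tau_1^n)|^2$ independent, so Lemma~\ref{l true moment and 1/2 rate} supplies a factor $C(\tau_1^n-t_n)\le Ch$ for the first and $t_{n+1}-\tau_1^n\le h$ for the second; since $E\mathbbm{1}_{\{N_n=1\}}=P(N_n=1)\le qh$ by Lemma~\ref{l EN_i, EN_i^2 }, this gives $E|R_n(6)|^2\le Ch^3$. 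For $R_n(7)$ the measure $d[M_{i_0j_0}]$ counts at most $N_n$ jumps and the integrand is controlled by Remark~\ref{rem:growth}, so the BDG bracket is dominated by $\mathbbm{1}_{\{N_n\ge2\}}N_n^2(1+\sup_t|X(t)|)^2$; the elementary estimate $E(\mathbbm{1}_{\{N_n\ge2\}}N_n^2)\le\sum_{N\ge2}N^2q^Nh^N\le Ch^2$, which follows from Lemma~\ref{l EN_i, EN_i^2 }(a), provides the extra order. For $R_n(8)$ the compensator $d\langle M_{i_0j_0}\rangle=q_{i_0j_0}\mathbbm{1}_{\{\alpha(u-)=i_0\}}\,du$ contributes a factor $(s-t_n)\le h$, and together with Remark~\ref{rem:growth} and Lemma~\ref{l true moment and 1/2 rate} each block is again $O(h^3)$.

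The term $R_n(12)$ needs the inner sum split at $k=0$. For $k=0$ one has $\tau_0^n=t_n$, $X(\tau_0^n)=X_n$, $\alpha(\tau_0^n)=\alpha_n$ and $\alpha(u)=\alpha_n$ on $(t_n,\tau_1^n)$, so the integrand collapses to $\mathcal{D}\sigma_{(l)}(X_n,\alpha_n)\bigl(\sigma_{(l_1)}(X(u),\alpha_n)-\sigma_{(l_1)}(X_n,\alpha_n)\bigr)$, which is $\le C|X(u)-X_n|$ by Remark~\ref{rem:growth} and Assumption H-2; the resulting double It\^o integral, estimated with Lemma~\ref{l true moment and 1/2 rate}, is $O(h^3)$ per block. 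For $k\ge1$ a jump must already have occurred, so the term carries $\mathbbm{1}_{\{N_n\ge1\}}$; here I would bound the bracket by a constant via Remark~\ref{rem:growth}, apply Cauchy--Schwarz over the at most $N_n$ subintervals $(\tau_k^n,\tau_{k+1}^n)$, and conclude with $E(\mathbbm{1}_{\{N_n\ge1\}}N_n)=EN_n\le Ch$ from Lemma~\ref{l EN_i, EN_i^2 }(b).

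The main obstacle is $R_n(1)$, the only one of the five whose partial sums are not a martingale in the Brownian filtration, because its outer integral is $ds$ rather than $dW$. The H\"older argument that settles $R_n(2)$ and $R_n(9)$ fails on the jump part of $R_n(1)$: Cauchy--Schwarz across the $n_T$ blocks introduces a factor $n_T=Th^{-1}$ that cannot be absorbed, leaving only an $O(h)$ bound. The remedy is to expose the hidden martingale structure by stochastic Fubini: interchanging the order of integration gives
\begin{align*}
R_n(1)=\sum_{i_0\neq j_0}\int_{t_n}^{t_{n+1}}(t_{n+1}-u)\bigl(b(X(u),j_0)-b(X(u),i_0)\bigr)\,dM_{i_0j_0}(u).
\end{align*}
Since $W$ and $\alpha$ are independent, each $M_{i_0j_0}$ remains a martingale with respect to $\{\mathscr{F}_t\}$, and the integrand is predictable, so $\{\sum_{n=0}^{n'-1}R_n(1)\}$ is a martingale. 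Applying Doob's inequality together with the isometry for $M_{i_0j_0}$, the relevant bracket is $\int(t_{n+1}-u)^2\bigl(b(X(u),j_0)-b(X(u),i_0)\bigr)^2\,d\langle M_{i_0j_0}\rangle(u)$; bounding $(t_{n+1}-u)^2\le h^2$, using $d\langle M_{i_0j_0}\rangle=q_{i_0j_0}\mathbbm{1}_{\{\alpha(u-)=i_0\}}\,du$ and the linear growth of $b$ from Remark~\ref{rem:growth}, this is $O(h^2)$ overall. This Fubini-to-martingale reduction is the crux of the lemma; once it is in place, all five estimates reduce to routine applications of BDG, H\"older's inequality, Remark~\ref{rem:growth}, Lemma~\ref{l true moment and 1/2 rate} and Lemma~\ref{l EN_i, EN_i^2 }.
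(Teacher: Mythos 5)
Your proof is correct, and for $R_n(6)$, $R_n(7)$ and $R_n(8)$ it is essentially the paper's own argument: identify the partial sums as square-integrable martingales for $\{\mathscr{F}_T^\alpha\vee\mathscr{F}_{t_{n'}}^W\}$, apply Burkholder--Davis--Gundy, bound the integrands via Assumption H-2 or Remark \ref{rem:growth}, condition on $\mathscr{F}_T^\alpha$ so that Lemma \ref{l true moment and 1/2 rate} applies, and gain the extra order in $h$ from Lemma \ref{l EN_i, EN_i^2 } (for $R_n(7)$, exactly your estimate $E(\mathbbm{1}_{\{N_n\geq 2\}}N_n^2)\leq \sum_{N\geq 2}N^2q^Nh^N\leq Ch^2$; for $R_n(6)$, $P(N_n\geq 1)\leq qh$). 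Your treatment of $R_n(12)$ differs mildly: you split the $k$-sum at $k=0$ versus $k\geq 1$, using H-2 and boundedness of $\mathcal{D}\sigma_{(l)}$ for the $k=0$ piece and a crude bound times $E\mathbbm{1}_{\{N_n\geq 1\}}\leq qh$ for $k\geq 1$; the paper instead adds and subtracts $\mathcal{D}\sigma_{(l)}(X(u),\alpha(u))\sigma_{(l_1)}(X(u),\alpha(u))$, uses the H-3 Lipschitz bound on $\mathcal{D}\sigma_{(l)}$ for one piece, and splits the other on $\{N_n=0\}$ versus $\{N_n\geq 1\}$, where on $\{N_n=0\}$ it is the H-3 Lipschitz condition on the product $\mathcal{D}\sigma_{(l)}\sigma_{(l_1)}$ that plays the role of your $k=0$ collapse; both routes cost $O(h^3)$ per block. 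The only genuine divergence is $R_n(1)$, and there your argument is valid but your diagnosis is off: the partial sums $\sum_{n<n'}R_n(1)$ are already a square-integrable martingale for the joint filtration $\{\mathscr{F}_{t_{n'}}\}$ without any stochastic Fubini, since $E\big(R_n(1)\mid \mathscr{F}_{t_n}\big)=\int_{t_n}^{t_{n+1}}E\big(\int_{t_n}^{s}(\cdots)\,dM_{i_0j_0}(u)\mid\mathscr{F}_{t_n}\big)\,ds=0$; this is exactly what the paper exploits, following the discrete BDG step with H\"older's inequality in $s$ (cost $h$), then the isometry/BDG bound for the inner $dM_{i_0j_0}$-integral in terms of $d[M_{i_0j_0}]$, and finally $EN_n\leq Ch$, again $O(h^3)$ per block. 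Your Fubini identity $R_n(1)=\sum_{i_0\neq j_0}\int_{t_n}^{t_{n+1}}(t_{n+1}-u)\big(b(X(u),j_0)-b(X(u),i_0)\big)\,dM_{i_0j_0}(u)$ is a clean alternative --- the weight $(t_{n+1}-u)^2\leq h^2$ delivers the order directly through the predictable bracket $d\langle M_{i_0j_0}\rangle(u)=q_{i_0j_0}\mathbbm{1}_{\{\alpha(u-)=i_0\}}\,du$, with no within-block H\"older step --- at the price of justifying the interchange of the Lebesgue integral with the jump-martingale integral, which the paper's route avoids.
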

\begin{proof}
First, observe that $\{\sum_{n=0}^{n'-1}R_n(1);n'\in\{1,\ldots,n_T\}\}$ is a square integrable martingale with respect to filtration $\{\mathscr{F}_{t_{n'}}; n'\in\{1,\ldots,n_T\}\}$. Hence, using Burkholder-Davis-Gundy inequality and H\"older's inequality, one obtains, 
\begin{align*}
& E\Big(\sup_{n'\in\{1,\ldots,n_T\}}\Big|\sum_{n=0}^{n'-1}R_n(1)\Big|^2\Big)
\\
&= E\Big(\sup_{n'\in\{1,\ldots,n_T\}}\Big|\sum_{n=0}^{n'-1}\int_{t_n}^{t_{n+1}} \sum_{i_0\neq j_0}\int_{t_n}^s\Big(b(X(u),j_0)-b(X(u),i_0)\Big)dM_{i_0j_0}(u) ds\Big|^2 \Big)
\\
& \leq C h E\sum_{n=0}^{n_T-1} \int_{t_n}^{t_{n+1}} \sum_{i_0\neq j_0}\Big| \int_{t_n}^s\Big(b(X(u),j_0)-b(X(u),i_0)\Big)dM_{i_0j_0}(u)\Big|^2  ds
\\
& \leq C h E\sum_{n=0}^{n_T-1}  \int_{t_n}^{t_{n+1}} \sum_{i_0\neq j_0} \int_{t_n}^s |b(X(u),j_0)-b(X(u),i_0)|^2 d[M_{i_0j_0}](u)  ds
\end{align*}
which due to Remark \ref{rem:growth}, Lemma \ref{l true moment and 1/2 rate} and Lemma \ref{l EN_i, EN_i^2 } gives, 
\begin{align*}
E\Big(&\sup_{n'\in\{1,\ldots,n_T\}}\Big|\sum_{n=0}^{n'-1}R_n(1)\Big|^2\Big) 
\\
& \leq C h E \sum_{n=0}^{n_T-1} \int_{t_n}^{t_{n+1}} \sum_{i_0\neq j_0} \int_{t_n}^s \Big(1+ \sup_{0\leq t\leq T}|X(u)|^2\Big) d[M_{i_0j_0}](u)  ds
\\
& \leq C h E\Big( \Big(1+ E\Big(\sup_{0\leq t\leq T}|X(u)|^2\Big|\mathscr{F}_T^\alpha\Big) \Big)
\\
&\times  \sum_{n=0}^{n_T-1} \int_{t_n}^{t_{n+1}} \sum_{i_0\neq j_0} \big([M_{i_0j_0}](s)-[M_{i_0j_0}](t_n) \big)  ds\Big)
\\
& \leq C h^2 \sum_{n=0}^{n_T-1} E (N_n)  \leq C h^2. 
\end{align*}
Again, notice that $\{\sum_{n=0}^{n'-1}R_n(6);n'\in\{1,\ldots,n_T\}\}$ is a square integrable martingale with respect to filtration $\{\mathscr{F}_T^\alpha\vee \mathscr{F}_{t_{n'}};n'\in\{1,\ldots,n_T\}\}$. Thus, due to Burkholder-Davis-Gundy inequality and H\"older's inequality, one obtains,  
\begin{align*}
E\Big(&\sup_{n'\in\{1,\ldots,n_T\}}\Big|\sum_{n=0}^{n'-1}R_n(6)\Big|^2\Big)
\\
&= E\Big(\sup_{n'\in\{1,\ldots,n_T\}}\Big|\sum_{n=0}^{n'-1} \sum_{l=1}^m  \mathbbm{1}_{\{N_n=1 \}} \Big( \sigma_{(l)}(X(\tau_1^n),\alpha_{n+1})-\sigma_{(l)}(X_n,\alpha_{n+1})
\\
&\qquad -\sigma_{(l)}(X(\tau_1^n),\alpha_{n})+\sigma_{(l)}(X_n,\alpha_{n})\Big) \Big(W_l(t_{n+1})-W_l(\tau_1^n)\Big) \Big|^2 \Big)
\\
& \leq  C E\sum_{n=0}^{n_T-1} \sum_{l=1}^m  \mathbbm{1}_{\{N_n=1 \}} \big(|\sigma_{(l)}(X(\tau_1^n),\alpha_{n+1})-\sigma_{(l)}(X_n,\alpha_{n+1})|^2
\\
& \qquad +|\sigma_{(l)}(X(\tau_1^n),\alpha_{n})-\sigma_{(l)}(X_n,\alpha_{n})|^2\big) |W_l(t_{n+1})-W_l(\tau_1^n)|^2  
\\
& =  C E\sum_{n=0}^{n_T-1} \sum_{l=1}^m  \mathbbm{1}_{\{N_n=1 \}} \big(|\sigma_{(l)}(X(\tau_1^n),\alpha_{n+1})-\sigma_{(l)}(X_n,\alpha_{n+1})|^2
\\
& \qquad + |\sigma_{(l)}(X(\tau_1^n),\alpha_{n})-\sigma_{(l)}(X_n,\alpha_{n})|^2 \big) 
\\
&\qquad\times E(|W_l(t_{n+1})-W_l(\tau_1^n)|^2|\mathscr{F}_{t_n} \vee \mathscr{F}_T^\alpha )
\end{align*}
which due to Assumption H-2, Lemma \ref{l true moment and 1/2 rate} and Lemma \ref{l EN_i, EN_i^2 } gives, 
\begin{align*}
E\Big(&\sup_{n'\in\{1,\ldots,n_T\}}\Big|\sum_{n=0}^{n'-1}R_n(6)\Big|^2\Big)
\\
& \leq   C h E  \sum_{n=0}^{n_T-1} \mathbbm{1}_{\{N_n=1 \}} E\big( |X(\tau_1^n)-X_n|^2 |\mathscr{F}_T^\alpha\big) \leq   C h^2   \sum_{n=0}^{n_T-1} P(N_n\geq 1 )  \leq C h^2. 
\end{align*}
Furthermore, it is clear that $\{\sum_{n=0}^{n'-1}R_n(7);n'\in\{1,\ldots,n_T\}\}$ is a square integrable martingale with respect to filtration $\{\mathscr{F}_T^\alpha\vee\mathscr{F}_{t_{n'}};n'\in\{1,\ldots,n_T\}\}$. So, one uses Burkholder-Davis-Gundy inequality and  H\"older's inequality to obtain,  
\begin{align*}
E\Big(&\sup_{n'\in\{1,\ldots,n_T\}}\Big|\sum_{n=0}^{n'-1}R_n(7)\Big|^2\Big)
\\
& =E\Big(\sup_{n'\in\{1,\ldots,n_T\}}\Big|\sum_{n=0}^{n'-1} \sum_{l=1}^m \mathbbm{1}_{\{N_n\geq 2\}} \int_{t_n}^{t_{n+1}} \sum_{i_0\neq j_0} \int_{t_n}^s\Big(\sigma_{(l)}(X(u),j_0)
\\
&\qquad-\sigma_{(l)}(X(u),i_0)\Big)d[M_{i_0j_0}](u)dW_l(s)\Big|^2\Big)
\\
&\leq  C  E\Big(\sum_{n=0}^{n_T-1}\sum_{l=1}^m \mathbbm{1}_{\{N_n\geq 2\}} \int_{t_n}^{t_{n+1}} E\Big(\Big| \sum_{i_0\neq j_0} \int_{t_n}^s |\sigma_{(l)}(X(u),j_0)
\\
&\qquad -\sigma_{(l)}(X(u),i_0)|d[M_{i_0j_0}](u)\Big|^2 \Big|\mathscr{F}_T^\alpha\Big) ds
\\
&\leq  C  E\Big( \sum_{n=0}^{n_T-1} \mathbbm{1}_{\{N_n\geq 2\}} \int_{t_n}^{t_{n+1}} E\Big( \big(1+\sup_{0 \leq u \leq T}|X(u)|^2\big) 
\\
& \quad \Big(\sum_{i_0\neq j_0} \big([M_{i_0j_0}](s)-[M_{i_0j_0}](t_n)\big)\Big)^2  \Big|\mathscr{F}_T^\alpha\Big) ds
\\
&\leq  C  E\Big( \sum_{n=0}^{n_T-1}\mathbbm{1}_{\{N_n\geq 2\}} \int_{t_n}^{t_{n+1}}  \Big(\sum_{i_0\neq j_0} \big([M_{i_0j_0}](s)-[M_{i_0j_0}](t_n)\big)\Big)^2 
\\
&\qquad \times \Big(1+E \Big(\sup_{0 \leq u \leq T}|X(u)|^2   \Big|\mathscr{F}_T^\alpha\Big)\Big)\Big) ds
\end{align*}
which due to Lemme \ref{l true moment and 1/2 rate} and Lemma \ref{l EN_i, EN_i^2 } gives 
\begin{align*}
E\Big(&\sup_{n'\in\{1,\ldots,n_T\}}\Big|\sum_{n=0}^{n'-1}R_n(7)\Big|^2\Big)
\\
 & \leq C h E\Big(\sum_{n=0}^{n_T-1} \mathbbm{1}_{\{N_n\geq 2\}}   N_n^2  \Big) \leq Ch\sum_{n=0}^{n_T-1} \sum_{N=0}^\infty \mathbbm{1}_{\{N\geq 2\}}   N^2P(N_n=N)
 \\
 & \leq Ch \sum_{n=0}^{n_T-1}\sum_{N=2}^\infty    N^2P(N_n\geq N) 
\\
& \leq Ch \sum_{n=0}^{n_T-1}\sum_{N=0}^\infty   ( N+2)^2 h^{N+2} q^{N+2} \leq Ch^3 n_T\sum_{N=0}^\infty   ( N+2)^2 \frac{1}{2^N} \leq Ch^2 .
\end{align*}
One again notices that $\{\sum_{n=0}^{n'-1}R_n(8);n'\in\{1,\ldots,n_T\}\}$ is a square integrable martingale with respect to filtration $\{\mathscr{F}_T^\alpha\vee\mathscr{F}_{t_{n'}}^W;n'\in\{1,\ldots,n_T\}\}$. By  Burkholder-Davis-Gundy inequality, H\"older's inequality,  Remark \ref{rem:growth} and Lemma \ref{l true moment and 1/2 rate}, one obtains, 
\begin{align*}
E\Big(&\sup_{n'\in\{1,\ldots,n_T\}}\Big|\sum_{n=0}^{n'-1}R_n(8)\Big|^2\Big)  
\\
& =E\Big(\sup_{n'\in\{1,\ldots,n_T\}}\Big| \sum_{n=0}^{n'-1} \sum_{l=1}^m \int_{t_n}^{t_{n+1}} \sum_{i_0\neq j_0} \int_{t_n}^s\Big(\sigma_{(l)}(X(u),i_0)
\\
&\qquad -\sigma_{(l)}(X(u),j_0)\Big)d\langle M_{i_0j_0}\rangle(u)dW_l(s)\Big|^2\Big)
 \\
 & \leq  C  h E \sum_{n=0}^{n_T-1}\sum_{l=1}^m \int_{t_n}^{t_{n+1}}  \sum_{i_0\neq j_0} \int_{t_n}^s |\sigma_{(l)}(X(u),i_0)-\sigma_{(l)}(X(u),j_0)|^2 
 \\
 &  \qquad \times q_{i_0j_0} \mathbbm{1}_{\{\alpha(u-)=i_0\}} du  ds
 \\
 & \leq  C  h E \sum_{n=0}^{n_T-1}  \int_{t_n}^{t_{n+1}}  \int_{t_n}^s (1+|X(u)|^2) du  ds \leq C h^2.  
\end{align*}
 Finally, it is clear that $\{\sum_{n=0}^{n'-1}R_n(12);n'\in\{1,\ldots,n_T\}\}$ is a square integrable martingale with respect to filtration $\{\mathscr{F}_T^\alpha\vee\mathscr{F}_{t_{n'}};n'\in\{1,\ldots,n_T\}\}$. One uses Burkholder-Davis-Gundy inequality and gets the following estimates, 
\begin{align*}
E&\Big( \sup_{n'\in \{1,\ldots,n_T \} }\Big|\sum^{n'-1}_{n=0}R_n(12) \Big|^2 \Big)
\\
=&E\Big( \sup_{n'\in \{1,\ldots,n_T \} }\Big|\sum^{n'-1}_{n=0}\sum_{l=1}^m \int_{t_n}^{t_{n+1}} \sum_{k=0}^{\nu(t_n,s)}\sum_{l_1=1}^{m} \int_{\tau_k^n}^{\tau_{k+1}^n}\Big( \mathcal{D}\sigma_{(l)}(X(\tau_k^n),\alpha(\tau_k^n))  
\\
&\times \sigma_{(l_1)}(X(u),\alpha(u)) -\mathcal{D}\sigma_{(l)}(X_n,\alpha_n)\sigma_{(l_1)}(X_n,\alpha_n) \Big) dW_{l_1}(u)dW_l(s)\Big|^2 \Big)
 \\
\leq & CE \sum^{n_T-1}_{n=0}\sum_{l,l_1=1}^m \int_{t_n}^{t_{n+1}} \sum_{k=0}^{\nu(t_n,s)}E\Big( \Big|\int_{\tau_k^n}^{\tau_{k+1}^n}\Big( \mathcal{D}\sigma_{(l)}(X(\tau_k^n),\alpha(\tau_k^n)) \sigma_{(l_1)}(X(u),\alpha(u)) 
\\
&-\mathcal{D}\sigma_{(l)}(X_n,\alpha_n)\sigma_{(l_1)}(X_n,\alpha_n) \Big) dW_{l_1}(u)\Big|^2\Big| \mathscr{F}_T^{\alpha} \Big) ds 
\\
\leq&CE \sum^{n_T-1}_{n=0}\sum_{l,l_1=1}^m \int_{t_n}^{t_{n+1}} \sum_{k=0}^{\nu(t_n,s)}E\Big( \int_{\tau_k^n}^{\tau_{k+1}^n}| \mathcal{D}\sigma_{(l)}(X(\tau_k^n),\alpha(u)) \sigma_{(l_1)}(X(u),\alpha(u)) \\
&-\mathcal{D}\sigma_{(l)}(X_n,\alpha_n)\sigma_{(l_1)}(X_n,\alpha_n) |^2 du\Big| \mathscr{F}_T^{\alpha} \Big) ds \\
\leq&CE \sum^{n_T-1}_{n=0}\sum_{l,l_1=1}^m \int_{t_n}^{t_{n+1}} \sum_{k=0}^{\nu(t_n,s)}E\Big( \int_{\tau_k^n}^{\tau_{k+1}^n}| \mathcal{D}\sigma_{(l)}(X(\tau_k^n),\alpha(u)) \sigma_{(l_1)}(X(u),\alpha(u)) \\
&-\mathcal{D}\sigma_{(l)}(X(u),\alpha(u)) \sigma_{(l_1)}(X(u),\alpha(u)) |^2 du\Big| \mathscr{F}_T^{\alpha} \Big) ds \\
+&CE \sum^{n_T-1}_{n=0}\sum_{l,l_1=1}^m \int_{t_n}^{t_{n+1}} \sum_{k=0}^{\nu(t_n,s)}E\Big( \int_{\tau_k^n}^{\tau_{k+1}^n}| \mathcal{D}\sigma_{(l)}(X(u),\alpha(u)) \sigma_{(l_1)}(X(u),\alpha(u))\\
&-\mathcal{D}\sigma_{(l)}(X_n,\alpha_n)\sigma_{(l_1)}(X_n,\alpha_n) |^2 du\Big| \mathscr{F}_T^{\alpha} \Big) ds \\
\leq&CE \sum^{n_T-1}_{n=0}\sum_{l,l_1=1}^m \int_{t_n}^{t_{n+1}} \sum_{k=0}^{\nu(t_n,s)}E\Big( \int_{\tau_k^n}^{\tau_{k+1}^n}| \mathcal{D}\sigma_{(l)}(X(\tau_k^n),\alpha(u))  -\mathcal{D}\sigma_{(l)}(X(u),\alpha(u))|^2\\
&\times|\sigma_{(l_1)}(X(u),\alpha(u)) |^2 du\Big| \mathscr{F}_T^{\alpha} \Big) ds \\
+&CE \sum^{n_T-1}_{n=0}\sum_{l,l_1=1}^m \int_{t_n}^{t_{n+1}}\mathbf{1}_{\{N_n=0\}} E\Big( \int_{t_n}^{s}| \mathcal{D}\sigma_{(l)}(X(u),\alpha(u)) \sigma_{(l_1)}(X(u),\alpha(u))\\
&-\mathcal{D}\sigma_{(l)}(X_n,\alpha_n)\sigma_{(l_1)}(X_n,\alpha_n) |^2 du\Big| \mathscr{F}_T^{\alpha} \Big) ds \\
+&CE \sum^{n_T-1}_{n=0}\sum_{l,l_1=1}^m \int_{t_n}^{t_{n+1}}\mathbf{1}_{\{N_n\geq 1\}} E\Big( \int_{t_n}^{s}| \mathcal{D}\sigma_{(l)}(X(u),\alpha(u)) \sigma_{(l_1)}(X(u),\alpha(u))\\
&-\mathcal{D}\sigma_{(l)}(X_n,\alpha_n)\sigma_{(l_1)}(X_n,\alpha_n) |^2 du\Big| \mathscr{F}_T^{\alpha} \Big) ds 
\end{align*}
Further, on the application of Remark 2.1, Assumption H-3 and H\"older's inequality, one obtains,	
\begin{align*}
E&\Big( \sup_{n'\in \{1,\ldots,n_T \} }\Big|\sum^{n'-1}_{n=0}R_n(12) \Big|^2 \Big)\\
\leq &CE \sum^{n_T-1}_{n=0}\int_{t_n}^{t_{n+1}} \sum_{k=0}^{\nu(t_n,s)}E\Big( \int_{\tau_k^n}^{\tau_{k+1}^n}| X(\tau_k^n) -X(u)|^2|(1+|X(u)|) |^2 du\Big| \mathscr{F}_T^{\alpha} \Big) ds \\
+&CE \sum^{n_T-1}_{n=0}\sum_{l,l_1=1}^m \int_{t_n}^{t_{n+1}}\mathbf{1}_{\{N_n=0\}} E\Big( \int_{t_n}^{s}| \mathcal{D}\sigma_{(l)}(X(u),\alpha_n) \sigma_{(l_1)}(X(u),\alpha_n)\\
&-\mathcal{D}\sigma_{(l)}(X_n,\alpha_n)\sigma_{(l_1)}(X_n,\alpha_n) |^2 du\Big| \mathscr{F}_T^{\alpha} \Big) ds \\
+&CE \sum^{n_T-1}_{n=0} \int_{t_n}^{t_{n+1}}\mathbf{1}_{\{N_n\geq 1\}} E\Big( \int_{t_n}^{s}((1+|X(u)|^2)+(1+|X_n|)^2) du\Big| \mathscr{F}_T^{\alpha} \Big) ds\\
\leq &CE \sum^{n_T-1}_{n=0}\int_{t_n}^{t_{n+1}} \sum_{k=0}^{\nu(t_n,s)} \int_{\tau_k^n}^{\tau_{k+1}^n}\{E(| X(\tau_k^n) -X(u)|^4\big|\mathscr{F}_T^{\alpha})\}^{\frac{1}{2}}
\\
& \times \{E((1+|X(u) |^4)\big|\mathscr{F}_T^{\alpha})\}^{\frac{1}{2}} du ds  \\
& +CE \sum^{n_T-1}_{n=0}\int_{t_n}^{t_{n+1}}\mathbf{1}_{\{N_n=0\}} \int_{t_n}^{s}E\big(|X(u)-X_n|^2\big|\mathscr{F}_T^{\alpha}  \big)duds\\
+&CE \sum^{n_T-1}_{n=0} \int_{t_n}^{t_{n+1}}\mathbf{1}_{\{N_n\geq 1\}} E\Big( \int_{t_n}^{s}((1+|X(u)|^2)+(1+|X_n|)^2) du\Big| \mathscr{F}_T^{\alpha} \Big) ds\\
\end{align*}	
which due to Lemma [4.1, 5.2] gives
\begin{align*}
E&\Big( \sup_{n'\in \{1,\ldots,n_T \} }\Big|\sum^{n'-1}_{n=0}R_n(12) \Big|^2 \Big)\leq  \sum^{n_T-1}_{n=0} h^3+Ch\sum^{n_T-1}\int_{t_n}^{t_{n+1}}E(\mathbf{1}_{\{N_n\geq 1\}})ds\\
& \leq Ch^2
\end{align*}	

\noindent
This completes the proof of the lemma. 
\end{proof}
After proving the necessary lemmas, one now proceeds with the proof of the main result of this article i.e. Theorem \ref{thm:main}. 
\begin{proof}[\textbf{Proof of Theorem \ref{thm:main}}]
Let us recall expansion \eqref{scheme complete derivation}  and  scheme \eqref{eq:scheme} and hence write,  
\begin{align*}
X_{n} & - Y_{n}= X_{0}  - Y_{0}+ \sum_{k=0}^{n-1}\big(b(X_k,\alpha_k)-b(Y_k, \alpha_k)\big)h 
\\
& +\sum_{k=0}^{n-1}\sum_{l=1}^m\big(\sigma_{(l)}(X_k,\alpha_k)-\sigma_{(l)}(Y_k,\alpha_k)\big)\Delta_k W_l
\\
&+\sum_{k=1}^{n-1} \sum_{l,l_1=1}^m\int_{t_k}^{t_{k+1}}  \int_{t_k}^{s} \big( \mathcal{D} \sigma_{(l)}(X_k,\alpha_k) \sigma_{(l_1)}(X_k,\alpha_k) 
\\
& - \mathcal{D} \sigma_{(l)}(Y_k,\alpha_k) \sigma_{(l_1)}(Y_k,\alpha_k)\big)dW_{l_1}(u) dW_l(s) 
\\
&+\sum_{k=1}^{n-1} \sum_{l=1}^m \mathbbm{1}_{\{N_k=1\}}\Big(\sigma_{(l)}(X_k,\alpha_{k+1})-\sigma_{(l)}(X_k,\alpha_k)
\\
& -\sigma_{(l)}(Y_k,\alpha_{k+1})+\sigma_{(l)}(Y_k,\alpha_k)\Big)\Big(W_l(t_{k+1})-W_l(\tau_1^k)\Big)
\\
&+\sum_{k=0}^{n-1}\sum_{i=1}^{12}R_{k}(i)
\end{align*}
which further implies, 
\begin{align}
E&\Big(\sup_{n\in\{1,\ldots,n'\}}|X_{n} - Y_{n}|^2 \Big)\leq  CE|X_{0}  - Y_{0}|^2 \notag
\\
&+CE\Big(\sup_{n\in\{1,\ldots,n'\}} \Big|\sum_{k=0}^{n-1}\big(b(X_k,\alpha_k)-b(Y_k, \alpha_k)\big)h \Big|^2\Big) \notag
\\
&+CE\Big(\sup_{n\in\{1,\ldots,n'\}}\Big|\sum_{k=0}^{n-1}\sum_{l=1}^m\big(\sigma_{(l)}(X_k,\alpha_k)-\sigma_{(l)}(Y_k,\alpha_k)\big)\Delta_k W_l \Big|^2\Big) \notag
\\
&+CE\Big(\sup_{n\in\{1,\ldots,n'\}}\Big| \sum_{k=0}^{n-1} \sum_{l,l_1=1}^m\int_{t_k}^{t_{k+1}}  \int_{t_k}^{s} \big( \mathcal{D} \sigma_{(l)}(X_k,\alpha_k) \sigma_{(l_1)}(X_k,\alpha_k) \notag
\\
& \qquad - \mathcal{D} \sigma_{(l)}(Y_k,\alpha_k) \sigma_{(l_1)}(Y_k,\alpha_k)\big)dW_{l_1}(u) dW_l(s) \Big|^2\Big) \notag
\\
&+CE\Big(\sup_{n\in\{1,\ldots,n'\}}\Big|\sum_{k=0}^{n-1} \sum_{l=1}^m \mathbbm{1}_{\{N_k=1\}}\Big(\sigma_{(l)}(X_k,\alpha_{k+1})-\sigma_{(l)}(X_k,\alpha_k)\notag
\\
& \qquad -\sigma_{(l)}(Y_k,\alpha_{k+1})+\sigma_{(l)}(Y_k,\alpha_k)\Big)  \Big(W_l(t_{k+1})-W_l(\tau_1^k)\Big)\Big|^2 \Big) \notag
\\
&+CE\Big(\sup_{n\in\{1,\ldots,n'\}}\Big| \sum_{k=0}^{n-1}\sum_{i=1}^{12}R_{k}(i)\Big|^2\Big) \notag
\\
& =: CE|X_{0}  - Y_{0}|^2+ S_1+S_2+S_3+S_4 \notag
\\
&+C\sum_{i=1}^{12} E\Big(\sup_{n\in\{1,\ldots,n_T\}}\Big| \sum_{k=0}^{n-1}R_{k}(i)\Big|^2\Big) \label{eq:S1S4}
\end{align}
for any $n'=1,\ldots,n_T$. By using Assumption H-3, one can estimate $S_1$ as follows, 
\begin{align}
S_1&:= CE\Big(\sup_{n\in\{1,\ldots,n'\}} \Big|\sum_{k=0}^{n-1}\big(b(X_k,\alpha_k)-b(Y_k, \alpha_k)\big)h \Big|^2\Big) \notag
\\
& \leq Cn' h^2 E\sum_{k=0}^{n'-1}\big|b(X_k,\alpha_k)-b(Y_k, \alpha_k)\big|^2 \leq C h \sum_{k=0}^{n'-1}E\Big(\sup_{n\in{\{0,\ldots,k\}}}|X_n-Y_n|^2\Big) \label{eq:S1}
\end{align}
for any $n'=0,1,\ldots,n_T$.  For $S_2$, one uses  Burkholder-Davis-Gundy inequality and Assumption H-3 to get the following estimate, 
\begin{align}
S_2 & :=CE\Big(\sup_{n\in\{1,\ldots,n'\}}\Big|\sum_{k=0}^{n-1}\sum_{l=1}^m\big(\sigma_{(l)}(X_k,\alpha_k)-\sigma_{(l)}(Y_k,\alpha_k)\big)\Delta_k W_l\Big|^2\Big) \notag
\\
& \leq C h E\Big(\sum_{k=0}^{n'-1}\sum_{l=1}^m\big|\sigma_{(l)}(X_k,\alpha_k)-\sigma_{(l)}(Y_k,\alpha_k)\big|^2 \Big)  \notag
\\
& \leq C h \sum_{k=0}^{n'-1}E\Big(\sup_{n\in{\{0,\ldots,k\}}}|X_n-Y_n|^2\Big)\label{eq:S2}
\end{align}
for any $n'=1,\ldots,n_T$. Due to Burkholder-Davis-Gundy inequality, $S_3$ can be estimated by, 
\begin{align}
S_3 & := CE\Big(\sup_{n\in\{1,\ldots,n'\}}\Big| \sum_{k=0}^{n-1} \sum_{l,l_1=1}^m\int_{t_k}^{t_{k+1}}  \int_{t_k}^{s} \big( \mathcal{D} \sigma_{(l)}(X_k,\alpha_k) \sigma_{(l_1)}(X_k,\alpha_k) \notag
\\
& \qquad - \mathcal{D} \sigma_{(l)}(Y_k,\alpha_k) \sigma_{(l_1)}(Y_k,\alpha_k)\big)dW_{l_1}(u) dW_l(s) \Big|^2\Big) \notag
\\
& \leq C  E\Big( \sum_{k=0}^{n'-1}\sum_{l,l_1=1}^m\int_{t_k}^{t_{k+1}}  E\Big(\Big|  \int_{t_k}^{s} \big( \mathcal{D} \sigma_{(l)}(X_k,\alpha_k) \sigma_{(l_1)}(X_k,\alpha_k) \notag
\\
& \qquad - \mathcal{D} \sigma_{(l)}(Y_k,\alpha_k) \sigma_{(l_1)}(Y_k,\alpha_k)\big)dW_{l_1}(u) \Big|^2 \Big| \mathscr{F}_T^\alpha \vee \mathscr{F}_{t_k}\Big)  ds \Big) \notag
\\
& \leq C  E\Big( \sum_{k=0}^{n'-1}\sum_{l,l_1=1}^m\int_{t_k}^{t_{k+1}}    \int_{t_k}^{s} E\Big( \big| \mathcal{D} \sigma_{(l)}(X_k,\alpha_k) \sigma_{(l_1)}(X_k,\alpha_k) \notag
\\
& \qquad - \mathcal{D} \sigma_{(l)}(Y_k,\alpha_k) \sigma_{(l_1)}(Y_k,\alpha_k)\big|^2 du \Big| \mathscr{F}_T^\alpha \vee \mathscr{F}_{t_k}\Big)  ds \Big) \notag
\end{align}
which on using Assumption H-3 gives the following, 
\begin{align}
S_3\leq C  h^2  \sum_{k=0}^{n'-1} E|X_k  - Y_k|^2  \leq C h  \sum_{k=0}^{n'-1} E\Big(\sup_{n\in\{0,\ldots,k\}}|X_n  - Y_n|^2\Big)  \label{eq:S3} 
\end{align}
for any $n'=1,\ldots,n_T$. For estimating $S_4$, notice that 
\begin{align*}
& \Big\{\sum_{k=0}^{n-1} \sum_{l=1}^m \mathbbm{1}_{\{N_k=1\}}\Big(\sigma_{(l)}(X_k,\alpha_{k+1})-\sigma_{(l)}(X_k,\alpha_k)-\sigma_{(l)}(Y_k,\alpha_{k+1})+\sigma_{(l)}(Y_k,\alpha_k)\Big) \notag
\\
& \qquad \times \Big(W_l(t_{k+1})-W_l(\tau_1^k)\Big) ; n\in\{1,\ldots,n_T\} \Big\}
\end{align*}
is a square integrable martingale with respect to filtration $\{\mathscr{F}_T^\alpha \vee \mathscr{F}_{t_{n}}^W;n\in\{1,\ldots,n_T\}\}$ and hence due to  Burkholder-Davis-Gundy inequality, one obtains 
\begin{align}
S_4& :=CE\Big(\sup_{n\in\{1,\ldots,n'\}}\Big|\sum_{k=0}^{n-1} \sum_{l=1}^m \mathbbm{1}_{\{N_k=1\}}\Big(\sigma_{(l)}(X_k,\alpha_{k+1}) \notag
\\
& \qquad -\sigma_{(l)}(X_k,\alpha_k)-\sigma_{(l)}(Y_k,\alpha_{k+1})+\sigma_{(l)}(Y_k,\alpha_k)\Big)\Big(W_l(t_{k+1})-W_l(\tau_1^k)\Big)\Big|^2 \Big) \notag
\\
& \leq CE\Big(\sum_{k=0}^{n'-1} \sum_{l=1}^m \mathbbm{1}_{\{N_k=1\}} \big|\sigma_{(l)}(X_k,\alpha_{k+1})-\sigma_{(l)}(X_k,\alpha_k)\notag
\\
& \qquad -\sigma_{(l)}(Y_k,\alpha_{k+1})+\sigma_{(l)}(Y_k,\alpha_k)\big|^2  \big|W_l(t_{k+1})-W_l(\tau_1^k) \big|^2 \Big) \notag
\\
& \leq CE\Big(\sum_{k=0}^{n'-1} \sum_{l=1}^m \mathbbm{1}_{\{N_k=1\}} \big(\big|\sigma_{(l)}(X_k,\alpha_{k+1})- \sigma_{(l)}(Y_k,\alpha_{k+1})\big|^2  \notag
\\
& \qquad + \big|\sigma_{(l)}(X_k,\alpha_k)-\sigma_{(l)}(Y_k,\alpha_k)\big|^2\big) E\big(\big|W_l(t_{k+1})-W_l(\tau_1^k) \big|^2\big|\mathscr{F}_T^\alpha \vee \mathscr{F}_{\tau_1^k \wedge t_{k}}\big) \Big) \notag
\\
&\leq C h \sum_{k=0}^{n'-1}E\Big(\sup_{n\in{\{0,\ldots,k\}}}|X_n-Y_n|^2\Big)   \label{eq:S4}
\end{align}
for any $n'=1,\ldots,n_T$. On substituting estimates from \eqref{eq:S1} to \eqref{eq:S4} and Lemma \ref{lem:R3R10}, Lemma \ref{lem:R4R5R11}, Lemma \ref{lem:R2R9} and Lemma \ref{lem:R1R6R7R8R12} in \eqref{eq:S1S4}, one obtains the following  estimates, 
\begin{align*}
E&\Big(\sup_{n\in\{0,\ldots,n'\}}|X_{n} - Y_{n}|^2 \Big)\leq CE|X_{0}  - Y_{0}|^2+Ch^2
\\
& +C h \sum_{k=0}^{n'-1}E\Big(\sup_{n\in{\{0,\ldots,k\}}}|X_n-Y_n|^2\Big)
\end{align*}
for any $n'=1,\ldots,n_T$. The Gronwall's lemma and Assumption H-1 completes the proof. 
\end{proof}


\begin{thebibliography}{20}



\bibitem{bao2016}
J. Bao and J. Shao (2016).  Permanence and extinction of regime-switching predator-prey models, \emph{SIAM J. Math. Anal.}, 48, 725-73. 

\bibitem{kumar2019a}
C. Kumar and T. Kumar (2019).
On Explicit Tamed Milstein-type scheme for Stochastic Differential Equation with Markovian Switching, \emph{Preprint}.  

\bibitem{kumar2019}
C. Kumar and S. Sabanis (2019). On Milstein approximations with varying coefficients: the case of super-linear diffusion coefficients, to appear in \emph{BIT Numerical Mathematics}. 

\bibitem{mao2006}
X. Mao and C. Yuan (2006).  \emph{Stochastic Differential Equations with Markovian Switching}, Imperial College Press, London. 


 \bibitem{nguyen2017}
S.L. Nguyen, T.A. Hoang, D.T. Nguyen and G. Yin (2017). Milstein-type procedures for numerical solutions of stochastic differential equations with Markovian switching, \emph{SIAM J. Numer. Anal.},  55 (2),  953-979.

\bibitem{nguyen2018}
D. T.Nguyen, S. L.Nguyen,  T. A. Hoang and G. Yin (2018). Tamed-Euler method for hybrid stochastic differential equations with Markovian switching, \emph{Nonlinear Analysis: Hybrid Systems}, 30, 14-30.

\bibitem{nguyen2012}
S. L. Nguyen and G.Yin (2012).  Pathwise convergence rates for numerical solutions of Markovian switching stochastic differential equations, \emph{Nonlinear Analysis: Real World Applications}, 13(3), 1170-1185.

\bibitem{sethi1994}
S. P. Sethi and Q. Zhang (1994).  \emph{Hierarchical Decision Making in Stochastic Manufacturing Systems}, Birkh\"auser, Boston.

\bibitem{yang2018} 
H. Yang and X. Li (2018). Explicit approximations for nonlinear switching diffusion systems in finite and infinite horizons, \emph{Journal of Differential Equations},  265(7), 2921-2967. 

\bibitem{yin2010}
G. Yin and C. Zhu (2010).  \emph{Hybrid Switching Diffusions, Properties and Applications}, Springer, New York.

\bibitem{zhang1998}
Q. Zhang (1998). Nonlinear filtering and control of a switching diffusion with small observation noise, \emph{SIAM J. Control Optim.}, 36, 1638-1668.

\bibitem{zhang2001}
Q. Zhang (2001). Stock trading: An optimal selling rule, \emph{SIAM J. Control Optim.}, 40, 64-87.

\end{thebibliography}

\subsection*{Acknowledgement}
First author gratefully acknowledge financial support provided  by  Science and Engineering Research Board (SERB) under its MATRICS  program through grant number SER-1329-MTD.

\bibliographystyle{amsplain}

\end{document}